\newtheorem{thm}{Theorem}[section]
\newtheorem{lem}[thm]{Lemma}
\newtheorem{prop}[thm]{Proposition}
\newtheorem{conj}[thm]{Conjecture}
\theoremstyle{definition}
\newtheorem{defin}[thm]{Definition}
\newtheorem{rem}[thm]{Remark}
\numberwithin{equation}{section}
\begin{document}


\baselineskip=17pt



\title[volume and Harder-Narasimhan filtration]{On the volume of a pseudo-effective class and  semi-positive properties of the Harder-Narasimhan filtration on a compact  Hermitian manifold}
\author[Z. Wang]{Zhiwei Wang}
\address{ School
of Mathematical Sciences\\Peking University\\ Beijing 100871, China}
\email{wangzw@amss.ac.cn}

\date{}

\begin{abstract}
This paper divides into two parts.  Let $(X,\omega)$ be a compact Hermitian manifold. Firstly, if the Hermitian metric $\omega$ satisfies the assumption that $\partial\overline{\partial}\omega^k=0$ for all $k$, we generalize the volume of the cohomology class in the K\"{a}hler setting to the Hermitian setting, and prove that the volume is always finite  and the Grauert-Riemenschneider type criterion holds true, which is a partial answer to a conjecture posed by Boucksom. Secondly, we observe that  if the anticanonical bundle $K^{-1}_X$ is nef, then for any $\varepsilon>0$, there is a smooth function $\phi_\varepsilon$ on $X$ such that $\omega_\varepsilon:=\omega+i\partial\overline{\partial}\phi_\varepsilon>0$ and  Ricci$(\omega_\varepsilon)\geq-\varepsilon\omega_\varepsilon$. Furthermore, if $\omega$ satisfies the assumption as above, we prove that for a Harder-Narasimhan filtration of $T_X$ with respect to $\omega$, the slopes $\mu_\omega(\mathcal{F}_i/\mathcal{F}_{i-1})\geq 0$ for all $i$, which generalizes a result of Cao which plays a very important role in his studying of the structures of K\"{a}hler manifolds.
\end{abstract}

\subjclass[2010]{Primary 53C55; Secondary 14C17, 14C20, 31C10, 32J25, 32Q26, 32S45}

\keywords{volume, $\partial\bar{\partial}$-cohomology, nef class, pseudo-effective class, big class, closed positive current, Gauduchon metric, Monge-Amp\`{e}re equation, Harder-Narasimhan filtration, stability}

\maketitle

\section{Introduction}
In this paper, we recall some results in K\"{a}hler geometry  and study to what extend they can be generalized to the case of Hermitian manifolds.

Let $L$ be a holomorphic line bundle on a compact complex manifold $X$, one defines the volume of $L$ as
\begin{align*}
\mbox{vol}(L):=\limsup_{k\rightarrow +\infty}\frac{n!}{k^n}h^0(X,kL).
\end{align*}
It is well-known that if vol$(L)>0$, then $L$ is big. From \cite{DEL00}, one knows that if vol$(L)>0$, the limsup is in fact a limit, so that the volume vol$(L)$ can be seen as a measure of the bigness of $L$. From the definition, vol$(kL)=k^n\mbox{vol}(L)$. Thus one can also define the volume of a $\mathbb{Q}$-line bundle by setting vol$(L)=k^{-n}\mbox{vol}(kL)$ for some $k$ such that $kL$ is an actual line bundle.

 In \cite{Bou02}, Boucksom introduced a formula expressing the volume of $L$ in terms of $c_1(L)$:
\begin{align*}
\mbox{vol}(L)=\max_T\int_X T^n_{ac}
\end{align*}
for $T$ ranging among the closed positive $(1,1)$-currents in the cohomology class $c_1(L)$, if $L$ is not pseudo-effective, then we let vol$(L)=0$. Where $T_{ac}$ is the absolutely continuous part of the Lebesgue decomposition of $T$  on $X$. Furthermore, the volume of a line bundle is generalized for a cohomology class: for a cohomology class $\alpha\in H^{1,1}(X,\mathbb{R})$, we define
\begin{align*}
\mbox{vol}(\alpha):=\sup_T\int_XT^n_{ac}
\end{align*}
for $T$ ranging over the closed positive $(1,1)$-currents in $\alpha$, in case $\alpha$ is pseudo-effective, otherwise we let vol$(\alpha)=0$. The K\"{a}hler property plays an important role in the proof of  the finiteness  of the above volumes. Here we mention  a couple of results in \cite{Bou02}:
\begin{itemize}
\item [(a)] If $\alpha\in H^{1,1}(X,\mathbb{R})$ is nef, then vol$(\alpha)=\alpha^n$.
\item [(b)] A class $\alpha\in H^{1,1}(X,\mathbb{R})$ is big if and only if vol$(\alpha)>0$.
\end{itemize}
In fact, (b) is a  Grauert-Riemenschneider type criterion for bigness. Let us recall the Grauert-Riemenschneider conjecture (now it is a theorem) for the sake of completeness: a compact complex variety $Y$  is Moishezon if and only if there is a proper non singular modification  $X\rightarrow Y$ and a line bundle $L$ over $X$ such that the curvature is $>0$ on a dense open subset.  A compact complex manifold is said to be Moishezon if it is  birational to a projective manifold. Siu \cite{Siu84} first proved this conjecture by getting  a stronger result that $X$ is Moishezon as soon as $i\Theta_L\geq 0$ everywhere and $i\Theta_L>0$ in at least one point. Later Demailly \cite{Dem85, Dem91} gave another proof of a stronger result than the conjecture by using his holomorphic Morse inequalities. Also Berndtsson \cite{Ber02} gave another proof. It is proved in \cite{JiS93} that a compact complex manifold $X$ is Moishezon, if and only if $X$ admits  an integral K\"{a}hler current, i.e. there exits a big line bundle $L$ on $X$. Now  one can see that (b) is obviously a generalization of Grauert-Rimenschneider  criterion. In fact, it gave a criterion for a transcendental class to be big rather than an integral class.  To conclude, the Philosophy of the study of the  volumes defined above is to ask for the existence of a K\"{a}hler current in a class  $\alpha$ provided that vol$(\alpha)>0$. In \cite{Bou02}, the following conjecture was posed.
\begin{conj}\label{Boucksom conjecture}
If a compact complex manifold $X$ carries a closed positive $(1,1)$-current $T$ with $\int_X T^n_{ac}>0$, then $X$ is in the  \textit{Fujiki} class.
\end{conj}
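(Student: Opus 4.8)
The plan is to turn the analytic hypothesis $\int_X T^n_{ac}>0$ into a genuine K\"{a}hler current and then appeal to a transcendental Grauert--Riemenschneider characterization of the Fujiki class. Recall the Moishezon criterion quoted above: $X$ is Moishezon exactly when it carries an integral K\"{a}hler current. Dropping integrality, the transcendental analogue states that $X$ lies in the Fujiki class if and only if it admits a K\"{a}hler current, i.e.\ a closed positive $(1,1)$-current $S$ with $S\geq\delta\omega$ for some $\delta>0$. So it suffices to produce such an $S$ out of $T$, and throughout I would work under the standing hypothesis $\partial\overline{\partial}\omega^k=0$ for all $k$, which is what allows the K\"{a}hler machinery to survive on $(X,\omega)$.

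First I would regularize $T$. By Demailly's approximation of a closed positive current by currents with analytic singularities, replace $T$ by a sequence $T_\varepsilon$ with analytic singularities and $T_\varepsilon\geq-\lambda_\varepsilon\omega$, $\lambda_\varepsilon\downarrow 0$. The point of the hypothesis on $\omega$ is exactly that, by the first part of the paper, the self-intersection $\int_X T^n_{ac}$ is a well-defined finite quantity in the Hermitian setting; together with semicontinuity of the non-pluripolar Monge--Amp\`{e}re mass this keeps $\int_X (T_\varepsilon)^n_{ac}$ bounded below by a fixed positive constant as $\varepsilon\to 0$.

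The heart of the matter is a mass-concentration argument in the spirit of Demailly--P\u{a}un. The positivity $\int_X T^n_{ac}>0$ furnishes a definite amount of Monge--Amp\`{e}re mass, which I would sweep onto a single point $x_0$ by solving a family of Hermitian Monge--Amp\`{e}re equations $(\omega_\varepsilon+i\partial\overline{\partial}\psi)^n=f_\mu$, where $f_\mu$ is a smooth density concentrating near $x_0$; solvability is provided by the Monge--Amp\`{e}re theory on compact Hermitian manifolds (Tosatti--Weinkove). Passing to the limit yields a closed positive current with a strictly positive Lelong number at $x_0$, and a local perturbation of Richberg type, glued to the ambient metric, upgrades it to a K\"{a}hler current $S\geq\delta\omega$. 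The characterization recalled in the first paragraph then places $X$ in the Fujiki class.

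The hard part will be the concentration estimate itself. In the K\"{a}hler case it rests on integration by parts that uses $d\omega=0$; in the Hermitian case every such step produces torsion contributions assembled from $\partial\omega$, $\overline{\partial}\omega$ and $\partial\overline{\partial}\omega^k$. The whole role of assuming $\partial\overline{\partial}\omega^k=0$ for every $k$ is to annihilate precisely these terms whenever one integrates against powers of $\omega$, so the real work is to reorganize the Demailly--P\u{a}un comparison between Monge--Amp\`{e}re capacity and self-intersection so that each Stokes manipulation is covered by this vanishing. Controlling the a priori estimates for the Monge--Amp\`{e}re equation without the K\"{a}hler identities, and carrying them through the concentration limit, is the remaining delicate point; once the torsion terms are neutralized by the hypothesis, the K\"{a}hler argument should transfer with only cosmetic changes.
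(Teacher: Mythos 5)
First, a point about what you are actually proving. The statement is Conjecture \ref{Boucksom conjecture}, which concerns an \emph{arbitrary} compact complex manifold, and the paper does not prove it: it only establishes the partial result, Theorem \ref{Grauert-Riemenschneider}, under the extra hypothesis that $X$ carries a Gauduchon metric with $\partial\overline{\partial}\omega^k=0$ for all $k$. You import exactly this hypothesis as a ``standing assumption'' in your opening paragraph, so at best your argument targets the partial result, not the conjecture; for a general compact complex manifold there is no such $\omega$ available, and nothing in your sketch addresses that case. Within that restricted setting your first reduction (regularize to currents with analytic singularities keeping $\int_X T^n_{ac}$ bounded below, then look for a K\"ahler current and invoke Demailly's characterization of the Fujiki class) does match the paper's opening moves.

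The genuine gap is the central step. You reduce everything to a Demailly--P\u{a}un mass-concentration argument at a point and then assert that ``once the torsion terms are neutralized by the hypothesis, the K\"ahler argument should transfer with only cosmetic changes.'' That transfer is precisely the difficulty the paper's proof is designed to avoid: the paper never concentrates mass at a point. After resolving singularities so that $\mu_k^*T_k=\theta_k+D_k$ with $\theta_k$ smooth and $\int_{X_k}\theta_k^n\geq c>0$, it proves bigness of $\{\theta_k\}$ via Lamari's duality criterion, arguing by contradiction: if $\int_{X_k}\theta_k\wedge\omega_m^{n-1}\leq\frac{1}{m}\int_{X_k}\widetilde\omega_k\wedge\omega_m^{n-1}$ for Gauduchon metrics $\omega_m$, one solves the Tosatti--Weinkove Monge--Amp\`ere equation with right-hand side $C_m\,\omega_m^{n-1}\wedge\widetilde\omega_k$, bounds $C_m$ from below by $\int_{X_k}\theta_k^n$, and contradicts the Lamari inequality by estimating $\int_{X_k}\alpha_m\wedge\omega_m^{n-1}$ from below off a small exceptional set $E$ and from above by $3/m$. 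No Hermitian analogue of the Demailly--P\u{a}un concentration estimate is carried out anywhere, and you give no indication of how the torsion terms in that estimate would actually be handled --- you only assert that the vanishing of $\partial\overline{\partial}\omega^k$ ``should'' suffice. Moreover, even granting concentration, your final step is too quick: a closed positive current in the class with a strictly positive Lelong number at a single point $x_0$ is very far from a K\"ahler current $S\geq\delta\omega$; in Demailly--P\u{a}un that upgrade requires the full inductive machinery over subvarieties (concentration along the diagonal, induction on dimension), not a ``local perturbation of Richberg type.'' So the two load-bearing steps of your outline --- the Hermitian concentration estimate and the passage from a positive Lelong number to a K\"ahler current --- are both asserted rather than proved, and they constitute essentially the entire content of the theorem.
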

A compact complex manifold $X$ is said to be in the  Fujiki class if it is bimeromorphic to a K\"{a}hler manifold. Demailly \cite{Dem12} proved that a compact complex manifold $X$ is in the Fujiki class if and only if it carries a K\"{a}hler current.

Throughout this paper, we say that a Hermitian metric $\omega$ satisfies the \textbf{assumption (*)}, if $\partial\overline{\partial}\omega^k=0$ for $k\in \{1,\cdots,n-1\}$.

Now let $(X,\omega)$ be a compact  Hermitian manifold, and $\alpha$ an arbitrary cohomology class $\alpha$ in $H^{1,1}_{\partial\overline{\partial}}(X,\mathbb{R})$, one defines the volume of $\alpha$ as
\begin{align}
\mbox{vol}(\alpha):=\sup_T\int_XT^n_{ac}\notag
\end{align}
for $T$ ranging over the closed positive $(1,1)$-currents in $\alpha$, in case $\alpha$ is pseudo-effective. If it is not, we set $\mbox{vol}(\alpha)=0$.  We will see  that, the supremum involved is always finite under our assumption (*). It is trivial that the volume $\mbox{vol}(\alpha)$ of a big class $\alpha$ is non-zero. Firstly, we will prove that (a) also holds when $(X,\omega)$ is a compact Hermitian manifold endowed with a Gauduchon metric $\omega$ satisfying the assumption (*). Furthermore, by adapting arguments from \cite{Bou02} and  \cite{Chi13}, we are able to prove the following partial solution to Conjecture \ref{Boucksom conjecture}.
\begin{thm}\label{Grauert-Riemenschneider}
Let $X$ be a compact complex manifold, and let  $\omega$ be a Gauduchon metric on $X$ satisfying the  assumption (*). If $X$ carries a pseudo-effective class $\alpha\in H^{1,1}_{\partial\overline{\partial}}(X,\mathbb{R})$ such that vol$(\alpha)>0$, then $X$ is K\"{a}hler.
\end{thm}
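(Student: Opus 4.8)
The plan is to argue by contradiction, adapting Boucksom's volume machinery from \cite{Bou02} together with Chiose's duality-and-Monge--Amp\`ere method from \cite{Chi13}; assumption (*) will enter precisely to control the torsion terms $i\partial\overline\partial\omega^k$ that obstruct integration by parts on a non-K\"ahler manifold. Suppose $X$ is not K\"ahler. By a Harvey--Lawson/Lamari type duality---meaningful in this setting because $\omega$ is Gauduchon---there is a nonzero positive current $\Gamma$ of bidegree $(n-1,n-1)$ which is a boundary, namely the $(n-1,n-1)$-component of some $dS$; after rescaling I normalize $\int_X\Gamma\wedge\omega=1$. This $\Gamma$ records the failure of Kählerianity. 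On the other hand, since $\mathrm{vol}(\alpha)>0$ there is a closed positive current $T=\theta+i\partial\overline\partial\varphi\in\alpha$, with $\theta$ a fixed smooth representative, satisfying $\int_X T^n_{ac}>0$. The strategy is to play this positive Monge--Amp\`ere mass off against the mere existence of $\Gamma$.

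Since the naive wedge $\int_X T\wedge\Gamma$ of two currents is undefined, I would first replace $T$ by smooth positive comparison forms. Using Demailly regularization together with the solvability of the complex Monge--Amp\`ere equation on the Hermitian manifold $(X,\omega)$ (Cherrier, Tosatti--Weinkove, Guan--Li), I would solve, for small $\varepsilon>0$,
\begin{align*}
(\theta+\varepsilon\omega+i\partial\overline\partial u_\varepsilon)^n=c_\varepsilon\,\omega^n,\qquad \chi_\varepsilon:=\theta+\varepsilon\omega+i\partial\overline\partial u_\varepsilon>0,
\end{align*}
and arrange, through the volume estimates of \cite{Bou02}, that $\mathrm{vol}(\alpha)>0$ forces $\liminf_{\varepsilon\to 0}c_\varepsilon>0$; hence $\int_X\chi_\varepsilon^n=c_\varepsilon\int_X\omega^n$ is bounded below by a positive constant. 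Because the $\chi_\varepsilon$ are genuinely smooth and positive, all the pairings $\int_X\chi_\varepsilon^{\,k}\wedge\omega^{\,n-k}$ and $\int_X\chi_\varepsilon\wedge\Gamma$ now make sense, and I would study how they compete as $\varepsilon\to0$.

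The heart of the argument, and the step I expect to be the main obstacle, is a mixed Cauchy--Schwarz (Khovanskii--Teissier) inequality of the shape
\begin{align*}
\int_X\chi_\varepsilon^n\ \le\ C\,\Big(\int_X\chi_\varepsilon\wedge\Gamma\Big)\Big(\int_X\chi_\varepsilon^{\,n-1}\wedge\omega\Big),
\end{align*}
with $C$ and $\int_X\chi_\varepsilon^{\,n-1}\wedge\omega$ bounded uniformly in $\varepsilon$ via the a priori estimates for $u_\varepsilon$. Here the boundary nature of $\Gamma$ is decisive: writing $\Gamma$ as the $(n-1,n-1)$-part of $dS$ and integrating by parts, the pairing $\int_X\chi_\varepsilon\wedge\Gamma$ reduces to torsion integrals built from $d\chi_\varepsilon=d\theta+\varepsilon\,d\omega$ against $S$, and the Gauduchon condition together with assumption (*) is exactly what keeps these contributions under control and ultimately incompatible with the lower bound $\int_X\chi_\varepsilon^n\ge c>0$. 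Every such integration by parts manipulates the mixed forms $\chi_\varepsilon^{\,k}\wedge\omega^{\,n-k}$ and is legitimate only because $\partial\overline\partial\omega^k=0$ for all $k$; without assumption (*) the surviving terms $i\partial\overline\partial\omega^k$ would destroy the estimate. The genuinely delicate points are thus threefold: establishing the mixed inequality and the uniform bound on $\int_X\chi_\varepsilon^{\,n-1}\wedge\omega$ from the Hermitian Monge--Amp\`ere a priori estimates; securing $\liminf_\varepsilon c_\varepsilon>0$ from $\mathrm{vol}(\alpha)>0$, which requires reconciling the Monge--Amp\`ere mass with the non-cohomological self-intersections of the Hermitian setting and checking that the singular part of $T$ does not erode the bound; and driving the torsion pairing against $S$ to the contradiction. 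Once the contradiction is reached, no such $\Gamma$ can exist, and the duality forces $X$ to be K\"ahler.
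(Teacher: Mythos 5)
Your overall strategy (contradiction via a Lamari/Harvey--Lawson type duality, combined with Hermitian Monge--Amp\`ere solutions whose total mass is bounded below by $\mathrm{vol}(\alpha)>0$) is in the right spirit, but there is a genuine gap at the very first analytic step, and it is precisely the point that separates this theorem from Chiose's earlier result for nef classes. You propose to solve
$(\theta+\varepsilon\omega+i\partial\overline\partial u_\varepsilon)^n=c_\varepsilon\omega^n$ with $\chi_\varepsilon=\theta+\varepsilon\omega+i\partial\overline\partial u_\varepsilon>0$ and $u_\varepsilon$ \emph{smooth}, where $\theta$ is a fixed smooth representative of $\alpha$. The existence of such a smooth positive solution requires the background form $\theta+\varepsilon\omega$ (or some smooth form in its class) to be positive, i.e.\ it requires $\alpha$ to be nef. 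A general pseudo-effective class contains only a closed positive \emph{current} $T$, typically with nonvanishing Lelong numbers; Demailly's regularization then produces smooth forms in $\alpha$ whose negative part is controlled by $-C\lambda_k\omega$ with $\lambda_k(x)\to\nu(T,x)$, which does \emph{not} tend to zero. So the objects $\chi_\varepsilon$ you want to feed into the mixed Khovanskii--Teissier inequality and into the pairing with $\Gamma$ simply do not exist in the non-nef case, and the whole scheme collapses to the already-known nef statement.

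The paper's proof supplies exactly the missing mechanism: regularize $T$ by currents $T_k\in\alpha$ with \emph{analytic singularities}, $T_k\ge-\varepsilon_k\omega$ and $\int_XT_{k,ac}^n\ge c>0$; resolve the singularities by a modification $\mu_k:X_k\to X$ so that $\mu_k^*T_k=\theta_k+D_k$ with $\theta_k$ smooth and $\theta_k\ge-\varepsilon_k\mu_k^*\omega$ and $D_k$ an effective divisor; construct on $X_k$ a Gauduchon metric $\widetilde\omega_k$ still satisfying assumption (*) (this requires the explicit blow-up lemma, since (*) is not automatically inherited upstairs); and only then run the Lamari-duality-plus-Monge--Amp\`ere contradiction on $X_k$ to show that $\{\theta_k\}$ is big. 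The K\"ahler current so obtained is pushed forward (adding back $D_k$) to a K\"ahler current on $X$, which places $X$ in the Fujiki class; the final passage from Fujiki to K\"ahler is not your direct duality with a boundary current $\Gamma$ but an appeal to Chiose's theorem that a Fujiki-class manifold carrying a $\partial\overline\partial$-closed Hermitian metric is K\"ahler (here assumption (*) includes $\partial\overline\partial\omega=0$). If you want to salvage your outline, you must insert the resolution-of-singularities step and carry the Gauduchon-(*) structure to the modification; without it, the claimed lower bound $\liminf_\varepsilon c_\varepsilon>0$ and the smooth pairings $\int_X\chi_\varepsilon\wedge\Gamma$ cannot even be formulated for a non-nef pseudo-effective class.
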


Thus for the same reason as in \cite{Bou02}, this  definition  is compatible with the previous one when $X$ is assumed to satisfy the assumption (*).

Since every compact complex surface always  carries  a Gauduchon metric satisfying the assumption (*), Theorem \ref{Grauert-Riemenschneider} states that the Grauert-Riemenschneider type  criterion always holds true on compact complex surface which was proved in \cite{Bou02} by a different argument.

In \cite{Chi13}, Chiose proved that if $X$ is a compact complex manifold, which admits a Gauduchon metric satisfying the assumption (*), a nef class $\alpha\in H^{1,1}_{\partial\overline{\partial}}(X,\mathbb{R})$ has positive volume, then $\alpha$ is a big class and $X$ is in the  \textit{Fujiki} class, and finally K\"{a}hler. The main difference between our Theorem \ref{Grauert-Riemenschneider} and Chiose's result is that we only assume that $\alpha$ is a pseudo-effective class. In general, the nef cone is only a subset of the pseudo-effective cone.

Recently, there has been important progress  on the study of   the structure of compact K\"{a}hler manifolds with nef anticanonical bundles. In \cite{Pau97, Pau98}, it is proved that if $X$ is a compact K\"{a}hler manifold with $K^{-1}_X$ nef, then $\pi_1(X)$ has polynomial growth and, as a consequence it possesses a nilpotent subgroup of finite index. In \cite{Cao13a, Cao13b}, it is proved that for a compact K\"{a}hler manifold $X$ with $K^{-1}_X$ nef, it is projective and rationally connected if and only if $H^0(X, (T^*_X)^{\otimes m})=0$ for all $m\geq 1$. This result is a partial solution to a conjecture attributed to  Mumford.  The following two properties are crucial to prove the above results.
\begin{itemize}
\item[(1)] Let $(X,\omega)$ be a compact K\"{a}hler manifold and $\{\omega\}$ is a K\"{a}hler class on $X$. Then $K^{-1}_X$ is nef if and only if for every $\varepsilon>0$, there exists a K\"{a}hler metric $\omega_\varepsilon=\omega+i\partial\overline{\partial}\phi_\varepsilon$ in the cohomology class $\{\omega\}$ such that Ricci$(\omega_\varepsilon)\geq -\varepsilon\omega_\varepsilon$.
\item[(2)]Let $(X,\omega)$ be a compact K\"{a}hler manifold and  $K^{-1}_X$ is nef. Let \begin{align*}
0=\mathcal{F}_0\subset\mathcal{F}_1\subset\cdots\subset\mathcal{F}_s=T_X
\end{align*}
be a Harder-Narasimhan filtration of $T_X$ with respect to $\omega$. Then
\begin{align*}
\mu_\omega(\mathcal{F}_i/\mathcal{F}_{i-1})\geq 0
\end{align*}
for all $i$.
\end{itemize}
Since $K^{-1}_X$  can be also defined and  there is also an analogue of the Harder-Narasimhan filtration on a compact Hermitian manifold, it is natural to ask whether we can get  similar characterizations of nef $K^{-1}_X$ and the Harder-Narasimhan filtration on a compact Hermitian manifold? In this paper, we get the following
\begin{thm}\label{characterization of nef}
Let $(X,\omega)$ be a compact Hermitian manifold. Then the following properties are equivalent:
\begin{itemize}
\item [(i)] $K_X^{-1}$ is nef.
\item [(ii)] For every $\varepsilon>0$, there exists a smooth  real function $\phi_\varepsilon$, such that $\omega_{\phi_\varepsilon}=\omega+i\partial\overline{\partial}\phi_\varepsilon>0$ and  Ricci$(\omega_{\phi_\varepsilon})\geq -\varepsilon \omega_{\phi_\varepsilon}$.
\end{itemize}
\end{thm}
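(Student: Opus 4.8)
The plan is to translate both implications into statements about Hermitian metrics on the anticanonical line bundle $K_X^{-1}=\det T_X$, using the fact that the Chern--Ricci form $\mathrm{Ric}(\omega_\phi)=-i\partial\overline{\partial}\log\det(\omega_\phi)$ of any metric $\omega_\phi$ is exactly the curvature of the metric it induces on $K_X^{-1}$, and that any two curvatures of smooth metrics on the same line bundle differ by a global $i\partial\overline{\partial}$ of a smooth function. The implication (ii)$\Rightarrow$(i) will be elementary, while (i)$\Rightarrow$(ii) will rest on solving a complex Monge--Amp\`ere equation on $(X,\omega)$, which I expect to be the only genuinely hard point.

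For (ii)$\Rightarrow$(i), suppose $\omega_{\phi_\varepsilon}=\omega+i\partial\overline{\partial}\phi_\varepsilon>0$ satisfies $\mathrm{Ric}(\omega_{\phi_\varepsilon})\geq-\varepsilon\omega_{\phi_\varepsilon}$. The form $\mathrm{Ric}(\omega_{\phi_\varepsilon})$ is the curvature of the metric $h_\varepsilon$ that $\omega_{\phi_\varepsilon}$ induces on $K_X^{-1}$. I would replace $h_\varepsilon$ by $\tilde h_\varepsilon:=h_\varepsilon\,e^{-\varepsilon\phi_\varepsilon}$, whose curvature is
\[
\mathrm{Ric}(\omega_{\phi_\varepsilon})+\varepsilon\,i\partial\overline{\partial}\phi_\varepsilon=\mathrm{Ric}(\omega_{\phi_\varepsilon})+\varepsilon(\omega_{\phi_\varepsilon}-\omega)\geq-\varepsilon\omega,
\]
the last inequality being precisely the hypothesis. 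Thus for every $\varepsilon>0$ one obtains a smooth metric on $K_X^{-1}$ whose curvature dominates $-\varepsilon\omega$ against the \emph{fixed} reference $\omega$, which is the definition of nefness. The one remark to record is that the nef condition is independent of the reference metric (any two are comparable on the compact $X$ while $\varepsilon$ ranges over all positive reals), so the varying metric $\omega_{\phi_\varepsilon}$ in the hypothesis becomes harmless once the exact term $\varepsilon\,i\partial\overline{\partial}\phi_\varepsilon$ is absorbed into the weight.

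For (i)$\Rightarrow$(ii), fix $\varepsilon>0$ and use nefness to pick a smooth metric on $K_X^{-1}$ of curvature $\Theta_\varepsilon\geq-\varepsilon\omega$; writing $\Theta_\varepsilon=\mathrm{Ric}(\omega)+i\partial\overline{\partial}u_\varepsilon$ with $u_\varepsilon$ smooth, the form $\eta_\varepsilon:=\Theta_\varepsilon+\varepsilon\omega\geq0$ is semipositive. The aim is to produce $\omega_{\phi_\varepsilon}=\omega+i\partial\overline{\partial}\phi_\varepsilon>0$ with $\mathrm{Ric}(\omega_{\phi_\varepsilon})+\varepsilon\omega_{\phi_\varepsilon}=\eta_\varepsilon$, which gives $\mathrm{Ric}(\omega_{\phi_\varepsilon})\geq-\varepsilon\omega_{\phi_\varepsilon}$ at once. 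Unwinding this identity via $\mathrm{Ric}(\omega_{\phi_\varepsilon})=\mathrm{Ric}(\omega)-i\partial\overline{\partial}\log(\omega_{\phi_\varepsilon}^n/\omega^n)$ and $i\partial\overline{\partial}\phi_\varepsilon=\omega_{\phi_\varepsilon}-\omega$, and using that a pluriharmonic function on compact $X$ is constant, it collapses to the scalar Monge--Amp\`ere equation
\[
(\omega+i\partial\overline{\partial}\phi_\varepsilon)^n=e^{\varepsilon\phi_\varepsilon-u_\varepsilon+c_\varepsilon}\,\omega^n,\qquad \omega+i\partial\overline{\partial}\phi_\varepsilon>0,
\]
for an unknown $\phi_\varepsilon$ and at most a normalizing constant $c_\varepsilon$. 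Once $\phi_\varepsilon$ is found, a direct computation recovers $\mathrm{Ric}(\omega_{\phi_\varepsilon})+\varepsilon\omega_{\phi_\varepsilon}=\Theta_\varepsilon+\varepsilon\omega\geq0$, as wanted.

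The main obstacle is therefore the solvability and regularity of this Monge--Amp\`ere equation on an arbitrary compact Hermitian manifold, for which I would invoke the Hermitian Calabi--Yau theory of Tosatti--Weinkove. Crucially, the zeroth-order term $e^{\varepsilon\phi_\varepsilon}$ carries the favorable sign $\varepsilon>0$: evaluating the equation at the extrema of $\phi_\varepsilon$ and using $i\partial\overline{\partial}\phi_\varepsilon\leq0$ (resp.\ $\geq0$) there immediately controls $\|\phi_\varepsilon\|_{C^0}$ in terms of $\varepsilon^{-1}$ and $\|u_\varepsilon\|_{C^0}$, and this same sign forces uniqueness and removes the auxiliary integrability/normalization constant that obstructs the pure prescribed-volume problem. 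The Laplacian and higher-order estimates are then furnished by the Hermitian Monge--Amp\`ere machinery and the continuity method closes the argument. I note that no Gauduchon or $\partial\overline{\partial}$-type assumption on $\omega$ is required here, consistent with the hypotheses of the theorem.
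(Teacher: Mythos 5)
Your proposal is correct and follows essentially the same route as the paper: (ii)$\Rightarrow$(i) by twisting the metric that $\omega_{\phi_\varepsilon}$ induces on $K_X^{-1}$ by the factor $e^{-\varepsilon\phi_\varepsilon}$, and (i)$\Rightarrow$(ii) by reducing to the Monge--Amp\`ere equation $(\omega+i\partial\overline{\partial}\phi)^n=e^{\varepsilon\phi-F_\varepsilon}\omega^n$ with positive zeroth-order coefficient. The only divergence is bibliographic: the paper simply invokes Cherrier's theorem for the solvability of this equation, whereas you propose to rederive it from the Tosatti--Weinkove estimates together with the maximum-principle $C^0$ bound that the sign of $\varepsilon$ provides.
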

\begin{thm}\label{semi-positivity}
Let  $(X,\omega)$ be a compact complex manifold with a Gauduchon metric $\omega$ satisfying the assumption (*). Assume that $K_X^{-1}$ is nef. Let
\begin{align*}
0=\mathcal{F}_0\subset\mathcal{F}_1\subset\cdots\subset\mathcal{F}_s=T_X
\end{align*}
be a Harder-Narasimhan filtration of $T_X$ with respect to $\omega$. Then
\begin{align*}
\mu_\omega(\mathcal{F}_i/\mathcal{F}_{i-1})\geq 0
\end{align*}
for all $i$.
\end{thm}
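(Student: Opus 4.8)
The plan is to follow Cao's strategy from the Kähler case, replacing the Kähler form by the metrics $\omega_\varepsilon$ produced by Theorem \ref{characterization of nef} and using assumption (*) to keep the degree functional under control. First I would record the structural fact that makes the $\omega_\varepsilon$ usable: under assumption (*) the forms $\omega^{k}$ are $\partial\overline{\partial}$-closed for $1\le k\le n-1$, so for any closed real $(1,1)$-form $\theta$ the quantity $\int_X\theta\wedge\omega^{n-1}$ is unchanged when $\omega$ is replaced by $\omega_{\phi}=\omega+i\partial\overline{\partial}\phi$; expanding $\omega_\phi^{n-1}-\omega^{n-1}$ binomially and integrating by parts, and using that $\theta$ is closed, each resulting summand contains a factor $\partial\overline{\partial}\omega^{j}$ with $1\le j\le n-1$, which vanishes by (*). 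Consequently $\deg_{\omega}(\mathcal F)=\deg_{\omega_\varepsilon}(\mathcal F)$ and $\mu_\omega(\mathcal F)=\mu_{\omega_\varepsilon}(\mathcal F)$ for every coherent subsheaf, so the Harder--Narasimhan filtration and all its slopes are the same whether computed with $\omega$ or with any $\omega_\varepsilon$. This freedom is what lets me bring the curvature hypothesis $\mathrm{Ricci}(\omega_\varepsilon)\ge-\varepsilon\omega_\varepsilon$ to bear on a filtration that is intrinsically attached to $\omega$.

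Next I would reduce the statement to a single slope. Because the successive quotients of a Harder--Narasimhan filtration have strictly decreasing slopes, $\mu_\omega(\mathcal F_s/\mathcal F_{s-1})$ is the smallest of them, so it suffices to prove $\mu_{\min}:=\mu_\omega(T_X/\mathcal F_{s-1})\ge 0$. Assume for contradiction that $\mu_{\min}<0$ and set $Q:=T_X/\mathcal F_{s-1}$, an $\omega$-semistable torsion-free quotient of rank $r$ with $\deg_\omega(Q)=r\mu_{\min}<0$. Over the locally free locus $X_0$, whose complement is analytic of codimension $\ge 2$ and hence negligible for the Chern--Weil integrals, I equip $T_X$ with the metric induced by $\omega_\varepsilon$ and $Q$ with the quotient metric, so that $\deg_{\omega_\varepsilon}(Q)=\frac{1}{(n-1)!}\int_X\mathrm{tr}(i\Theta_Q)\wedge\omega_\varepsilon^{n-1}$. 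Quotient bundles are more positive than the ambient bundle, giving $i\Theta_Q\ge (i\Theta_{T_X})|_Q$ in the Griffiths sense, so the second fundamental form contributes non-negatively and $\deg_{\omega_\varepsilon}(Q)$ is bounded below by the corresponding integral for the partial trace of the curvature of $(T_X,\omega_\varepsilon)$ along the $Q$-directions.

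The crux --- and the step I expect to be the main obstacle --- is to convert the hypothesis $\mathrm{Ricci}(\omega_\varepsilon)\ge-\varepsilon\omega_\varepsilon$, which controls only the full trace $\Lambda_{\omega_\varepsilon}\mathrm{tr}(i\Theta_{T_X})=\Lambda_{\omega_\varepsilon}\mathrm{Ricci}(\omega_\varepsilon)\ge -n\varepsilon$, into control of the partial trace along the $Q$-directions produced by the previous estimate. A pointwise Ricci lower bound does not by itself bound such a partial trace, so here I would invoke the $\omega$-semistability of $Q$: by Bando--Siu theory, valid on the Gauduchon manifold $(X,\omega)$ under assumption (*), the semistable sheaf $Q$ carries, for every $\delta>0$, an approximate Hermitian--Einstein metric whose mean curvature lies within $\delta$ of the constant endomorphism $\gamma\,\mathrm{Id}_Q$, with $\gamma$ proportional to $\mu_{\min}<0$. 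Feeding this metric, rather than the naked quotient metric, into the Bochner--Kodaira identity applied to the nonzero holomorphic surjection $T_X\twoheadrightarrow Q$, while absorbing the Hermitian torsion terms using assumption (*), should yield $\deg_\omega(Q)=\deg_{\omega_\varepsilon}(Q)\ge -C\varepsilon$ with $C$ independent of $\varepsilon$. Since $\deg_\omega(Q)=r\mu_{\min}$ is a fixed negative number independent of $\varepsilon$, letting $\varepsilon\to0$ forces $r\mu_{\min}\ge0$, the desired contradiction. The delicate points to verify are the availability of the approximate Hermitian--Einstein metric in the Gauduchon setting, the precise bookkeeping of the torsion terms in the Bochner identity, and that all Chern--Weil integrals extend across the singular locus of the filtration.
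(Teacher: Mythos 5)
Your preparatory steps --- the invariance of $\deg_\omega$ under $\omega\mapsto\omega_\varepsilon$ via assumption (*), the reduction to the minimal slope $\mu_\omega(T_X/\mathcal{F}_{s-1})$, and the nonnegative contribution of the second fundamental form of the quotient --- all match the paper. But the step you yourself identify as the crux is exactly where your argument stops being a proof, and the paper resolves it quite differently. It does not pass through Hermitian--Einstein theory at all: it reads the hypothesis $\mathrm{Ricci}(\omega_\varepsilon)\geq-\varepsilon\omega_\varepsilon$ as the \emph{operator} inequality
\begin{align*}
i\Theta_{T_X,\omega_\varepsilon}\wedge\omega_\varepsilon^{n-1}\geq-\varepsilon\,\mathrm{Id}_{T_X}\,\omega_\varepsilon^{n},
\end{align*}
i.e. $\Lambda_{\omega_\varepsilon}i\Theta_{T_X,\omega_\varepsilon}\geq-\varepsilon\,\mathrm{Id}_{T_X}$ as a Hermitian endomorphism of $T_X$. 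An endomorphism lower bound, unlike a bound on its full trace, survives restriction to quotients and partial fiber-traces, so together with the second-fundamental-form positivity it gives at once $\int_X c_1(T_X/\mathcal{F}_i)\wedge\omega_\varepsilon^{n-1}\geq-\varepsilon\,r_i\int_X\omega_\varepsilon^{n}=-\varepsilon\,r_i\int_X\omega^{n}$, and one lets $\varepsilon\to0$. (This identification of the Ricci form with the mean-curvature endomorphism is the curvature symmetry used by Cao and Demailly in the K\"{a}hler case; the paper asserts the equivalence in the Hermitian setting without further comment, and that assertion is where the real content of the argument sits.)

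Your substitute for this step is not carried out and, as sketched, does not close. The sentence ``should yield $\deg_\omega(Q)\geq-C\varepsilon$'' \emph{is} the theorem in the case $i=s-1$, and the ``delicate points'' you defer are each substantial. First, approximate Hermitian--Einstein metrics for merely semistable reflexive sheaves are a Bando--Siu statement in the K\"{a}hler setting; Bruasse's Gauduchon theory supplies the Harder--Narasimhan filtration, not such metrics. Second, and more seriously, the Bochner--Kodaira identity applied to the surjection $f\colon T_X\twoheadrightarrow Q$, viewed as $f\in H^0(X,\Omega_X\otimes Q)$, produces the curvature term $\langle(\Lambda_{\omega_\varepsilon}i\Theta_{\Omega_X\otimes Q})f,f\rangle$, whose $\Omega_X$-contribution is again a \emph{partial} trace of $\Lambda_{\omega_\varepsilon}i\Theta_{T_X,\omega_\varepsilon}$ over the directions carried by $f$ --- precisely the quantity you correctly observed the Ricci bound does not control by itself. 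You have relocated the partial-trace difficulty rather than removed it. To complete a proof along either line you must justify the displayed operator inequality (or an adequate substitute for it); as written, the proposal has a genuine gap at its central step.
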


The structure of this paper is as follows. In Section \ref{technical preliminaries}, we prepare the technical preliminaries. In Section \ref{volume of pseudo-effective and nef class}, we  prove that for any nef class $\alpha\in H^{1,1}_{\partial\overline{\partial}}(X,\mathbb{R})$, the volume satisfies $vol(\alpha)=\alpha^n$. It is a generalization of the Theorem 4.1 in \cite{Bou02}. In Section \ref{Grauert-Riemenschneider criterion}, we  prove  Theorem \ref{Grauert-Riemenschneider}. In Section \ref{nef}, we prove Theorem \ref{characterization of nef}. In section \ref{semipositive}, we prove Theorem \ref{semi-positivity}.

\section{Technical preliminaries}\label{technical preliminaries}
Let $X$ be a compact complex $n$-fold. We will use $dd^c$ to denote the operator $\frac{i}{\pi}\partial\overline{\partial}$.
\begin{defin}\label{APC}
A closed real $(1,1)$-current $T$ on $X$ is said to be almost positive if some smooth real $(1,1)$-form $\gamma$ can be found such that $T\geq \gamma$. A function $\varphi\in L^1_{loc}(X)$ is called almost plurisubharmonic if its complex Hessian $dd^c\varphi$ is an almost positive current.

We say that a function $\phi$ on $X$ has analytic singularities along a subscheme $V(\mathscr{I})$ (corresponding to a coherent ideal sheaf $\mathscr{I}$) if there exists $c>0$ such that $\phi$ is locally congruent to $\frac{c}{2}\log(\sum|f_i|^2)$ modulo smooth functions, where $f_1,\cdots, f_r$ are local generators of $\mathscr{I}$.  Note that a function with analytic singularities is automatically almost plurisubharmonic, and is smooth away from the support of $V(\mathscr{I})$.

We say an almost positive $(1,1)$-current  has analytic  singularities, if we can find a smooth form $\theta$ and a function  $\varphi$ on $X$ with analytic   singularities, such that $T=\theta+dd^c\varphi$. Note that one can always write $T=\theta+dd^c\varphi$ with $\theta$ smooth and $\varphi$ almost plurisubharmonic on a compact complex manifold.
\end{defin}

\subsection{$\partial\overline{\partial}$-cohomology}\label{BC-cohomology}
Let $X$ be an arbitrary compact complex manifold of complex dimension $n$. Since the $\partial\overline{\partial}$-lemma does not hold in general, it is better to work with $\partial\overline{\partial}$-cohomology which is defined as
\begin{align}
H^{p,q}_{\partial\overline{\partial}}(X,\mathbb{C})=\big (\mathcal{C}^\infty(X,\Lambda^{p,q}T^*_X)\cap \ker d)/\partial\overline{\partial}\mathcal{C}^\infty(X,\Lambda^{p-1,q-1}T^*_X).\notag
\end{align}

By means of the Fr\"{o}licher spectral sequence, one can see  that  $H^{p,q}_{\partial\overline{\partial}}(X,\mathbb{C})$ is finite dimensional and can be computed either with spaces of smooth forms or with currents. In both cases, the quotient topology of $H^{p,q}_{\partial\overline{\partial}}(X,\mathbb{C})$ induced by the Fr\'{e}chet topology of smooth forms or by the weak topology of currents is Hausdorff, and the quotient map under this Hausdorff topology is continuous and open.

In this paper, we will just need the $(1,1)$-cohomology space $H^{1,1}_{\partial\overline{\partial}}(X,\mathbb{C})$.  The real structure on the space of $(1,1)$-smooth forms (or $(1,1)$-currents) induces a real structure on $H^{1,1}_{\partial\overline{\partial}}(X,\mathbb{C})$, and we denote by $H^{1,1}_{\partial\overline{\partial}}(X,\mathbb{R})$ the space of real points.  A class $\alpha\in H^{1,1}_{\partial\overline{\partial}}(X,\mathbb{C})$ can be seen as an affine space of closed $(1,1)$-currents. We denote by $\{T\}\in H^{1,1}_{\partial\overline{\partial}}(X,\mathbb{C}) $ the class of the current $T$. Since $i\partial\overline{\partial}$ is a real operator (on forms of currents), if $T$ is a real closed $(1,1)$-current, its class $\{T\}$ lies in $H^{1,1}_{\partial\overline{\partial}}(X,\mathbb{R})$ and consists of all the closed currents $T+i\partial\overline{\partial}\varphi$ where $\varphi$ is a real current of degree $0$.

\begin{defin}\label{pf-nef-kahler}
Let $(X,\omega$) be a compact Hermitian manifold. A cohomology class $\alpha\in H^{1,1}_{\partial\overline{\partial}}(X,\mathbb{R})$ is said to be \textbf{pseudo-effective} iff it contains a positive current; $\alpha$ is \textbf{nef} iff, for each $\varepsilon>0$, $\alpha$ contains a smooth form $\theta_\varepsilon$ with $\theta_\varepsilon\geq -\varepsilon\omega$; $\alpha$ is \textbf{big} iff it contains a K\"{a}hler current, i.e. a closed $(1,1)$-current $T$ such that $T\geq\varepsilon\omega$ for $\varepsilon>0$ small enough. Finally, $\alpha$ is a \textbf{K\"{a}hler class} iff it contains a K\"{a}hler form.
\end{defin}
Since any two Hermitian forms $\omega_1$ and $\omega_2$ are commensurable ( i.e. $C^{-1}\omega_2\leq \omega_1\leq C\omega_2$ for some $C>0$), these definitions do not depend on the choice of $\omega$.

\subsection{Lebesgue decomposition of a current}\label{Lebesgue decomposition} In this subsection, we refer to \cite{Bou02, MM07}.
For a measure $\mu$ on a manifold $M$ we denote by $\mu_{ac}$ and $\mu_{sing}$ the uniquely determined absolute continuous and singular measures (with respect to the Lebesgue measure on $M$) such that
\begin{align*}
\mu=\mu_{ac}+\mu_{sing}
\end{align*}
which is called the Lebesgue decomposition of $\mu$. If $T$ is a $(1,1)$-current of order $0$ on $X$, written locally $T=i\sum T_{ij}dz_i\wedge d\overline{z}_j$, we defines its absolute continuous and singular components by
\begin{align*}
T_{ac}&=i\sum (T_{ij})_{ac}dz_i\wedge d\overline{z}_j,\\
T_{sing}&=i\sum (T_{ij})_{sing}dz_i\wedge d\overline{z}_j.
\end{align*}
The Lebesgue decomposition of $T$ is then
\begin{align*}
T=T_{ac}+T_{sing}.
\end{align*}
If $T\geq 0$, it follows that $T_{ac}\geq 0$ and $T_{sing}\geq 0$. Moreover, if $T\geq \alpha$ for a continuous $(1,1)$-form $\alpha$, then $T_{ac}\geq \alpha$, $T_{sing}\geq 0$. The Radon-Nikodym theorem insures that $T_{ac}$ is (the current associated to) a $(1,1)$-form with $L^1_{loc}$ coefficients. The form $T_{ac}(x)^n$ exists for almost all $x\in X$ and is denoted $T^n_{ac}$.

Note that $T_{ac}$ in general is not closed, even when $T$ is, so that the decomposition doesn't induce a significant decomposition at the cohomological level. However, when $T$ is a closed positive $(1,1)$-current with analytic singularities along a subscheme $V$, the residual part $R$ in Siu decomposition (c.f.\cite{Siu74}) of $T$ is nothing but $T_{ac}$, and the divisorial part $\sum_k\nu(T,Y_k)[Y_k]$ is $T_{sing}$. The following facts are well-known.
\begin{lem}[c.f.\cite{Bou02}]\label{push-forward}
Let $f:Y\rightarrow X$ be a proper surjective holomorphic map. If $\alpha$ is a locally integrable form of bidimension $(k,k)$ on $Y$, then the push-forward current $f_*\alpha$ is absolutely continuous, hence a locally integrable form of bidimension $(k,k)$. In particular, when $T$ is a positive current on $Y$, the push-forward current $f_*(T_{ac})$ is absolutely continuous, and we have the formula $f_*(T_{ac})=(f_*T)_{ac}$.

\end{lem}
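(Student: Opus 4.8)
The plan is to prove both assertions by localising on $X$ and reducing everything to two ingredients: a mass estimate showing that $f_{*}$ sends order-zero currents to order-zero currents, and the generic submersivity of $f$, which turns the push-forward into an honest fibre integral away from a small set. First I would record the local structure of $f$. Since $f$ is proper and surjective with $\dim Y\geq\dim X$, the critical locus $C\subset Y$ (where $df$ fails to be surjective) is analytic, and by Remmert's proper mapping theorem its image $Z:=f(C)$ is an analytic subset of $X$; by Sard's theorem $Z\neq X$, so $Z$ is a proper analytic, hence Lebesgue-null, subset. Over $X\setminus Z$ the map $f$ is a proper submersion, so by Ehresmann it is a locally trivial fibre bundle with compact fibre of dimension $p=\dim Y-\dim X$, and in suitable local coordinates $f$ is the projection $(z,w)\mapsto z$ with $z\in\mathbb{C}^{\dim X}$, $w\in\mathbb{C}^{p}$.

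For the first statement I would argue in three steps. First, $f_{*}\alpha$ has order zero: for a test form $\phi$ of comass $\leq 1$ supported in a compact $K\subset X$, the comass of $f^{*}\phi$ is bounded on the compact set $f^{-1}(K)$, so
\[
|\langle f_{*}\alpha,\phi\rangle|=\Big|\int_{Y}\alpha\wedge f^{*}\phi\Big|\leq C_{K}\int_{f^{-1}(K)}|\alpha|,
\]
which yields the mass bound $\|f_{*}\alpha\|(U)\leq C_{U}\int_{f^{-1}(U)}|\alpha|$ for relatively compact open $U$, with $C_{U}$ uniformly bounded as $U$ shrinks to $Z$. Next, on $X\setminus Z$ I would compute $f_{*}\alpha$ as a fibre integral: writing $f$ in the projection normal form and decomposing $\alpha$ according to the number of differentials $dw_{j},d\overline{w}_{j}$ it carries, only the component containing the full fibre volume $dw\wedge d\overline{w}$ survives, and integrating its $L^{1}_{loc}$ coefficients over the compact fibre gives, by Fubini, an $L^{1}_{loc}$ form in $z$; hence $f_{*}\alpha$ is absolutely continuous on $X\setminus Z$. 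Finally, $f^{-1}(Z)$ is a proper analytic, hence null, subset of $Y$, so $\int_{f^{-1}(U)}|\alpha|\to 0$ as $U$ shrinks to $Z$; combined with the mass bound this forces $\|f_{*}\alpha\|(Z)=0$. Thus $f_{*}\alpha$ carries no singular mass and, being of order zero, is represented by an $L^{1}_{loc}$ form on all of $X$ by Radon--Nikodym.

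For the second statement, applying the first part to $\alpha=T_{ac}$ immediately gives that $f_{*}(T_{ac})$ is absolutely continuous. To obtain the formula I would decompose
\[
f_{*}T=f_{*}(T_{ac})+f_{*}(T_{sing}),
\]
and invoke the uniqueness of the Lebesgue decomposition: it then suffices to show that $f_{*}(T_{sing})$ is singular. Over $X\setminus Z$ one uses that, in the generically finite setting (in particular for a modification, which is the case used in the applications of this paper), $f$ is a finite unramified covering; since local biholomorphisms preserve the splitting into absolutely continuous and singular parts, $f_{*}(T_{sing})$ is singular there, while its restriction to the null set $Z$ is trivially singular. Hence $f_{*}(T_{sing})$ is singular and $(f_{*}T)_{ac}=f_{*}(T_{ac})$.

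I expect this last point to be the main obstacle, and it is exactly where the geometry of $f$ is decisive. The hypothesis that $f$ does not drop dimension is essential: if $f$ had positive-dimensional fibres, the singular part could spread out, since integration over a section of a fibration pushes forward to a smooth density, so that $f_{*}(T_{sing})$ would acquire an absolutely continuous part. Thus the compatibility of push-forward with the Lebesgue decomposition is genuinely a feature of generically finite maps, and the plan is to carry out the argument under that hypothesis (automatically satisfied by the modifications used later), controlling the behaviour over the discriminant $Z$ by the null mass estimate established in the first part.
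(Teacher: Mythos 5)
First, a point of comparison: the paper itself gives no proof of this lemma --- it is quoted from \cite{Bou02} as a known fact --- so your argument can only be measured against the standard one it refers to. For the first assertion your proof is correct and is essentially that standard argument: the order-zero mass bound $\|f_*\alpha\|(U)\le C_U\,\|\alpha\|(f^{-1}(U))$ coming from properness, the fibre-integration (Fubini) description of $f_*\alpha$ over the complement of the discriminant $Z=f(C)$ (a proper closed analytic subset by Remmert plus Sard), and the vanishing of the mass on $Z$ because $f^{-1}(Z)$ is a proper analytic, hence Lebesgue-null, subset and $\alpha\in L^1_{loc}$. (Ehresmann is not actually needed: the submersion normal form and Fubini already give the $L^1_{loc}$ fibre integral.)

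The second assertion is where your proposal genuinely deviates from the statement, and you are right to deviate. As literally stated --- for an arbitrary proper surjective holomorphic map --- the formula $f_*(T_{ac})=(f_*T)_{ac}$ is false, and your heuristic about sections of fibrations is an actual counterexample: take $Y=X\times E$ with $E$ a compact Riemann surface, $f$ the projection, and $T=[X\times\{p\}]$ the current of integration along a section. Then $T_{ac}=0$, so $f_*(T_{ac})=0$, while $f_*T$ is the constant function $1$, which is absolutely continuous, so $(f_*T)_{ac}=f_*T\neq f_*(T_{ac})$. Hence a hypothesis forcing $f$ to be generically finite (equivalently $\dim Y=\dim X$) is necessary. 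Under that hypothesis your argument is correct: over $X\setminus Z$ the map is a finite unramified covering, local biholomorphisms preserve the splitting into absolutely continuous and singular parts, a finite sum of singular measures is singular, and whatever mass $f_*(T_{sing})$ carries on the null set $Z$ is singular anyway; uniqueness of the Lebesgue decomposition together with the first part then gives the formula. Since every application of the lemma in this paper is to modifications $\mu_k:X_k\rightarrow X$, your restricted version suffices for all later uses. So the only ``gap'' is in the lemma's formulation, not in your proof; the statement should carry the generic finiteness (or equidimensionality) hypothesis, exactly as you propose.
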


The absolutely continuous part $T_{ac}$ of a positive current $T$ does not depend continuously on $T$, but we have the following semi-continuity property:

\begin{lem}[c.f.\cite{Bou02}]\label{semi-continuity}
Let $T_k$ be a sequence of positive $(1,1)$-currents converging weakly to $T$. Then one has
\begin{align}
T_{ac}(x)^n\geq \limsup T_{k,ac}(x)^n\notag
\end{align}
 for almost every $x\in X$.
\end{lem}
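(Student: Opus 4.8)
The plan is to reduce the statement to a pointwise problem about density matrices and then to exploit the concavity of the Monge--Amp\`ere density as a function of those matrices. First I would localize: on a coordinate ball write $T_k=i\sum (T_k)_{ij}\,dz_i\wedge d\bar z_j$ and $T=i\sum T_{ij}\,dz_i\wedge d\bar z_j$, so that positivity means the coefficients form positive Hermitian matrix-valued measures $\mathcal H_k,\mathcal H$. Writing $H_k(x),H(x)$ for the densities (with respect to Lebesgue measure $\lambda$) of their absolutely continuous parts, these are positive semidefinite Hermitian matrices for almost every $x$, and $T_{k,ac}(x)^n$, $T_{ac}(x)^n$ are, up to the fixed constant $n!$, the quantities $\det H_k(x)$ and $\det H(x)$ times the Euclidean volume form. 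Thus the lemma is equivalent to $\det H(x)\geq\limsup_k\det H_k(x)$ almost everywhere. The structural fact I would build on is that $M\mapsto(\det M)^{1/n}$ is concave on the cone of positive semidefinite Hermitian matrices (Minkowski's determinant inequality), together with its dual description $(\det M)^{1/n}=\tfrac1n\inf\{\operatorname{tr}(AM):A>0,\ \det A=1\}$, coming from the arithmetic--geometric mean inequality applied to the eigenvalues of $A^{1/2}MA^{1/2}$. This rewrites the determinant as an infimum of \emph{linear} functionals of $M$, the form in which weak convergence can be brought to bear.

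Next I would recover the densities by averaging. By the Lebesgue differentiation theorem, at almost every $x_0$ one has $H(x_0)=\lim_{r\to0}\lambda(B_r)^{-1}\int_{B(x_0,r)}H\,d\lambda$, and at almost every point the density of the singular part of $\mathcal H$ vanishes, so this equals $\lim_{r\to0}\lambda(B_r)^{-1}\mathcal H(B(x_0,r))$. For a fixed ball $B=B(x_0,r)$ whose boundary is $\mathcal H$-negligible, weak convergence $T_k\to T$ gives entrywise $\mathcal H_k(B)\to\mathcal H(B)$, obtained by squeezing $\mathbf 1_B$ between smooth test functions. Positivity yields $\int_B H_k\,d\lambda\leq\mathcal H_k(B)$ in the sense of positive matrices, so by monotonicity of the determinant together with Jensen's inequality in the form $\lambda(B)^{-1}\int_B(\det H_k)^{1/n}\,d\lambda\leq(\det(\lambda(B)^{-1}\int_B H_k\,d\lambda))^{1/n}$, I can estimate the ball-average of the approximants' Monge--Amp\`ere densities by $(\det(\lambda(B)^{-1}\mathcal H_k(B)))^{1/n}$. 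Sending $k\to\infty$ for fixed $B$ and then $r\to0$ produces the desired comparison with $\det H(x_0)$.

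The delicate heart of the argument, and what I expect to be the main obstacle, is the interchange of the two limits $k\to\infty$ and $r\to0$. For fixed $r$ the weak convergence only controls integrals over balls, so passing to pointwise densities forces a Fatou-type step, which naturally yields control of $\liminf_k(\det H_k)^{1/n}$ on ball-averages; one then differentiates the resulting integrable function and only afterwards lets $r\to0$. Keeping the estimate stable enough that this double limit survives, and upgrading the resulting comparison to the pointwise almost-everywhere inequality in the stated form, is exactly where the positivity of the currents and the concavity of $(\det)^{1/n}$ are indispensable. Concretely I would fix a positive definite $A$ with $\det A=1$, reduce to the scalar positive measures $\operatorname{tr}(A\,\mathcal H_k)\to\operatorname{tr}(A\,\mathcal H)$, whose absolutely continuous densities dominate $n(\det H_k)^{1/n}$ pointwise, establish the semicontinuity in this linearized scalar setting (where the density of the weak limit controls the relevant ball-averages), and finally take the infimum over $A$. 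Bounding the approximants' densities from above in a manner stable under the weak limit is the genuinely subtle point, and care is needed precisely here.
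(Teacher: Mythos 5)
Your toolkit is indeed the one behind this lemma: the paper gives no proof of its own, deferring to \cite{Bou02}, and the argument there is essentially what you describe --- localization to Hermitian matrix-valued measures $\mathcal H_k,\mathcal H$ with densities $H_k,H$, the concavity/dual formula $n(\det M)^{1/n}=\inf\{\operatorname{tr}(AM):A>0,\ \det A=1\}$, Lebesgue--Besicovitch differentiation, and portmanteau on balls with negligible boundary. However, the step you flag at the end as ``the genuinely subtle point'' is not a technicality: it is a genuine gap, and it cannot be closed, because the statement with $\limsup$ is false. Your chain of estimates controls $\limsup_k$ of the ball averages $\lambda(B)^{-1}\int_B(\det H_k)^{1/n}\,d\lambda$; to convert that into information about the average of the pointwise function $\limsup_k(\det H_k)^{1/n}$ you would need the reverse Fatou inequality $\int_B\limsup_k\le\limsup_k\int_B$, which fails in the absence of a dominating function precisely because of oscillation. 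What Fatou does give is $\int_B\liminf_k(\det H_k)^{1/n}\,d\lambda\le\liminf_k\int_B(\det H_k)^{1/n}\,d\lambda$, and running your argument with this (either via Jensen on each ball and then differentiation, or via your linearization over a countable dense family of matrices $A$) proves the correct statement
\[
T_{ac}(x)^n\ \ge\ \liminf_{k\to\infty} T_{k,ac}(x)^n\qquad\text{for almost every }x\in X.
\]

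The $\limsup$ version cannot be rescued by any proof. On the torus $X=(\mathbb C/(\mathbb Z+i\mathbb Z))^n$ with $z_j=x_j+iy_j$, put $T_k=(1+\sin(2\pi kx_1))\,i\,dz_1\wedge d\bar z_1+\sum_{j\ge 2}i\,dz_j\wedge d\bar z_j$. These are smooth, closed, positive $(1,1)$-forms converging weakly to $T:=\sum_j i\,dz_j\wedge d\bar z_j$ by the Riemann--Lebesgue lemma; here $T_{k,ac}=T_k$ and $T_{k,ac}^n=(1+\sin(2\pi kx_1))\,T^n$. Since $\{kx_1 \bmod 1\}$ is dense for every irrational $x_1$, one has $\limsup_k\sin(2\pi kx_1)=1$ for almost every $x_1$, hence $\limsup_k T^n_{k,ac}=2\,T^n_{ac}$ almost everywhere, contradicting the stated inequality. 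So the lemma (both here and as quoted from \cite{Bou02}) should be read with $\liminf$, or equivalently with $\limsup$ along sequences whose absolutely continuous densities converge almost everywhere, as those produced by Theorem \ref{regularization}(ii). That weaker, correct version is all the paper ever uses: in the proof of Theorem \ref{volume of nef} the approximants have Monge--Amp\`ere densities equal to the constants $\bigl(\int_X(T+\varepsilon\omega)^n\bigr)\omega^n$, which converge, and in the proof of Theorem \ref{Grauert-Riemenschneider} the regularizing sequence converges almost everywhere and Fatou's lemma is applied directly.
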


\subsection{Regularization of currents}\label{regularization of currents}
There are two basic types of regularizations (inside a fixed cohomology class) for closed $(1,1)$-currents, both due to J.-P. Demailly.
\begin{thm}[c.f.\cite{Dem82, Dem92, Bou02}]\label{regularization}
Let $T$ be a closed almost positive $(1,1)$-current on a compact Hermitian manifold $(X,\omega)$. Suppose that $T\geq\gamma$ for some smooth $(1,1)$-form $\gamma$ on $X$. Then
\begin{itemize}
\item [(i)] There exists a sequence of smooth forms $\theta_k$ in $\{T\}$ which converges weakly to $T$, and $\theta_k(x)\rightarrow T_{ac}(x)$ a.e.,  and such that $\theta_k\geq \gamma-C\lambda_k\omega$ where $C>0$ is a constant depending on the curvature of $(T_X,\omega)$ only, and $\lambda_k$ is a decreasing sequence of continuous functions such that $\lambda_k(x)\rightarrow \nu(T,x)$ for every $x\in X$.
\item [(ii)] There exists a sequence $T_k$ of currents with analytic singularities in $\{T\}$ which converges weakly to $T$, and $T_{k,ac}(x)\rightarrow T_{ac}(x)$ a.e., such that $T_k\geq \gamma-\varepsilon_k\omega$ for some sequence $\varepsilon_k>0$ decreasing to $0$, and such that $\nu(T_k,x)$ increases to $\nu(T,x)$ uniformly with respect to $x\in X$.
\end{itemize}
\end{thm}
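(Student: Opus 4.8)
The plan is to establish both statements as local-to-global constructions, in the spirit of Demailly's regularization method. First I would reduce to a problem about potentials: since $T$ is closed and almost positive with $T \geq \gamma$, write $T = \alpha + dd^c\varphi$ with $\alpha$ a smooth closed $(1,1)$-form representing $\{T\}$ and $\varphi$ a quasi-plurisubharmonic function, so that $dd^c\varphi \geq \gamma - \alpha =: \beta$ with $\beta$ smooth. The entire task then becomes to regularize the single quasi-psh potential $\varphi$ inside its class while controlling the loss of positivity in terms of the Lelong numbers $\nu(T,x) = \nu(\varphi,x)$.

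For the analytic-singularity statement (ii) I would use the $L^2$/Bergman regularization. Cover $X$ by finitely many coordinate charts; on each chart, for $m \in \mathbb{N}$, consider the Hilbert space $\mathcal{H}_m$ of holomorphic functions $f$ with $\int |f|^2 e^{-2m\varphi}\,dV < \infty$, let $(g_{m,j})$ be an orthonormal basis, and set $\varphi_m := \frac{1}{2m}\log\sum_j|g_{m,j}|^2$, which has analytic singularities by construction. The two-sided comparison is the heart of the matter: the sub-mean-value inequality for quasi-psh functions yields the upper bound $\varphi_m \leq \varphi + O(\tfrac{1}{m})$, hence $L^1$-convergence $\varphi_m \to \varphi$ and the inequality $\nu(\varphi_m,x) \leq \nu(\varphi,x)$, while the Ohsawa-Takegoshi extension theorem, used to extend the value at a point against the weight $e^{-2m\varphi}$, gives the reverse estimate $\varphi_m \geq \varphi - \frac{C}{m}$ together with $\nu(\varphi_m,x) \geq \nu(\varphi,x) - \frac{n}{m}$. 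Differentiating $\varphi_m$ produces the Hessian lower bound, and the a.e.\ convergence $T_{m,ac}(x) \to T_{ac}(x)$ follows from the explicit form of $\varphi_m$.

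The genuinely global step, and the one I expect to be the main obstacle, is to glue the locally defined $\varphi_m$ into a single current in the fixed class $\{T\}$ without destroying the pointwise lower bound. Here I would employ Demailly's regularized-maximum operation $\max_\eta(\cdot)$, which is smooth, convex in its arguments, and preserves plurisubharmonicity, to patch the local potentials across chart overlaps; comparing two local potentials on an overlap costs a term governed by the smooth transition data, and this is precisely where the curvature and torsion of $(T_X,\omega)$ enter and generate the constant $C$ and the error $\varepsilon_k\omega$ in the statements. The monotone, uniform convergence $\nu(T_k,x) \uparrow \nu(T,x)$ is then arranged by passing $m \to \infty$ along the gluing and invoking upper semicontinuity of Lelong numbers together with the two-sided Lelong estimates above.

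Finally, for the smooth statement (i) I would either restart with a direct convolution regularization $\varphi_k = \varphi * \rho_{1/k}$ transported by the exponential map of the Chern connection of $(T_X,\omega)$ — whose non-flatness again contributes the curvature constant $C$ — and check that the loss of positivity at scale $1/k$ is bounded below by $-C\lambda_k\omega$ with $\lambda_k$ a decreasing family of continuous functions converging to $\nu(T,\cdot)$; or else deduce (i) from (ii) by a secondary smoothing of the analytic singularities, which are already smooth off a proper analytic set. In both routes the weak convergence $\theta_k \to T$ and the a.e.\ convergence $\theta_k(x) \to T_{ac}(x)$ are standard consequences of the corresponding convergence of potentials.
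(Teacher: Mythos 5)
The paper contains no proof of this statement for you to be compared against: Theorem \ref{regularization} is stated as known background, with the citations \cite{Dem82, Dem92, Bou02} standing in for the argument, and it is then used as a black box (in Lemmas \ref{Bouksom 1} and \ref{Boucksom 2} and in the proofs of Theorems \ref{volume of nef} and \ref{Grauert-Riemenschneider}). So your sketch has to be measured against Demailly's original proofs, and in outline it does reproduce them faithfully: part (ii) is the local Bergman-kernel approximation $\varphi_m=\frac{1}{2m}\log\sum_j|g_{m,j}|^2$ with Ohsawa--Takegoshi giving one bound and the mean-value inequality the other, patched together by the regularized maximum; part (i) is the convolution along the exponential map of the Chern connection of $(T_X,\omega)$, whose curvature is exactly what produces the constant $C$. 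That is the correct architecture, and identifying the gluing as the genuinely global difficulty is also right.

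However, three specific points are wrong or too thin as written. First, your Lelong-number estimates are attached to the wrong potential bounds: dividing by $\log|z-x|<0$ reverses inequalities, so it is the Ohsawa--Takegoshi lower bound $\varphi_m\geq\varphi-C/m$ that yields $\nu(\varphi_m,x)\leq\nu(\varphi,x)$, and the mean-value upper bound that yields $\nu(\varphi_m,x)\geq\nu(\varphi,x)-n/m$ --- you have paired them the other way around. Moreover the upper bound is not the pointwise statement $\varphi_m\leq\varphi+O(1/m)$ (false near the singularities, where quasi-psh functions are only upper semicontinuous); it reads $\varphi_m(z)\leq\sup_{B(z,r)}\varphi+\frac{1}{m}\log(Cr^{-n})$, and the $-n/m$ loss in the Lelong numbers comes precisely from optimizing over $r$. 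Second, the monotone uniform convergence $\nu(T_k,x)\uparrow\nu(T,x)$ does not follow from upper semicontinuity of Lelong numbers; it requires an additional ingredient, e.g.\ the subadditivity property of the Bergman potentials, which lets one pass to a subsequence $m=2^k$ along which the $\varphi_m$ (after adding constants tending to $0$) decrease, forcing their Lelong numbers to increase. Third, your fallback route to (i) --- a ``secondary smoothing'' of the analytic-singularity currents from (ii) --- is not viable: smoothing a current inside a fixed class while pinning the loss of positivity to $\lambda_k\rightarrow\nu(T,\cdot)$ is exactly the content of (i), so nothing short of rerunning the Chern-connection convolution accomplishes it; only your first route works.
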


\subsection{Resolution of singularities}\label{resolution of singularities}
\begin{defin}\label{pull-back}
Let $f:Y\rightarrow X$ be a surjective holomorphic map between compact complex manifolds and $T$ be a closed almost positive $(1,1)$-current on $X$. Write $T=\theta+dd^c\varphi$ for some smooth form $\theta\in\{T\}$, and $\varphi$ an almost plurisubharmonic function on $X$.  We define its  pull back $f^*T$ by $f$ to be $f^*\theta+dd^cf^*\varphi$. Note that this definition is independent of the choices made, and we have $\{f^*T\}=f^*\{T\}$.
\end{defin}

We now use the notations in Definition \ref{APC}. From \cite{Hir64, BiM91, BiM97}, one can  blow-up $X$ along $V(\mathscr{I})$ and resolve the singularities, to get a smooth modification $\mu:\widetilde{X}\rightarrow X$, where $\widetilde{X}$ is a compact complex manifold, such that $\mu^{-1}{\mathscr{I}}$ is just $\mathcal{O}(-D)$ for some simple normal crossing divisor $D$ upstairs.   The pull back $\mu^*T$ clearly has analytic singularities along $V(\mu^{-1}(\mathscr{I}))=D$, thus its Siu decomposition  writes
\begin{align}
\mu^*T=\theta+cD,\notag
\end{align}
where $\theta$ is a smooth $(1,1)$-form.  If $T\geq \gamma$ for some smooth form $\gamma$, then $\mu^*T\geq \gamma$, and thus $\theta\geq \mu^*\gamma$. We call this operation a resolution of the singularities of $T$.

\subsection{Lamari's criterion}\label{lamari}
\begin{thm}
Let $X$ be an $n$-dimensional compact complex manifold and let $\Phi$ be a real $(k,k)$-form, then there exists a real $(k-1,k-1)$-current $\Psi$ such that $\Phi+dd^c\Psi$ is positive iff for any strictly positive $\partial\overline{\partial}$-closed $(n-k,n-k)$-forms $\Upsilon$, we have $\int_X\Phi\wedge\Upsilon\geq 0$.
\end{thm}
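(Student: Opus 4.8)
This is the duality lemma underlying Lamari's criterion, and I would prove it by Hahn--Banach separation, the forward (``necessity'') implication being elementary and the converse being the substantial one.

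First I would dispose of the easy direction. Suppose $\Psi$ is a real $(k-1,k-1)$-current with $T:=\Phi+dd^c\Psi\geq 0$, and let $\Upsilon$ be any strictly positive $\partial\overline{\partial}$-closed $(n-k,n-k)$-form. Then $\int_X T\wedge\Upsilon\geq 0$, since $T$ is a positive current and $\Upsilon$ is a strongly positive form of complementary bidegree. Because $X$ is compact and $dd^c\Upsilon=\frac{i}{\pi}\partial\overline{\partial}\Upsilon=0$, integration by parts yields $\int_X dd^c\Psi\wedge\Upsilon=\int_X\Psi\wedge dd^c\Upsilon=0$, whence $\int_X\Phi\wedge\Upsilon=\int_X T\wedge\Upsilon\geq 0$. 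This gives the ``only if'' part.

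For the converse I would work in the space $\mathcal{D}'^{k,k}(X,\mathbb{R})$ of real $(k,k)$-currents with its weak-$*$ topology, whose topological dual is the space of smooth real $(n-k,n-k)$-forms under the pairing $\langle T,\Upsilon\rangle=\int_X T\wedge\Upsilon$. Let $\mathcal{P}$ denote the convex cone of positive $(k,k)$-currents and let $L=dd^c\big(\mathcal{D}'^{k-1,k-1}(X,\mathbb{R})\big)$ be the image of $dd^c$, a linear subspace contained in $\ker d$. The existence of a current $\Psi$ with $\Phi+dd^c\Psi\geq 0$ is exactly the statement $\Phi\in\mathcal{P}+L$, so the plan is to argue by contradiction: assuming $\Phi\notin\mathcal{P}+L$, and granting that this cone is weak-$*$ closed, the geometric Hahn--Banach theorem produces a nonzero continuous functional, that is, a nonzero smooth $(n-k,n-k)$-form $\Upsilon$ with $\int_X\Phi\wedge\Upsilon<0$ and $\int_X c\wedge\Upsilon\geq 0$ for all $c\in\mathcal{P}+L$. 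Testing against $\mathcal{P}$ places $\Upsilon$ in the dual cone of the positive currents, so $\Upsilon$ is a strongly positive form; testing against the subspace $L$ forces $\int_X dd^c\Psi\wedge\Upsilon=0$ for every $\Psi$, i.e.\ $dd^c\Upsilon=0$, so $\Upsilon$ is $\partial\overline{\partial}$-closed. To upgrade this merely semi-positive $\Upsilon$ to a strictly positive competitor, I would fix one reference strictly positive $\partial\overline{\partial}$-closed $(n-k,n-k)$-form $\Upsilon_0$ (for instance $\omega^{n-1}$ for a Gauduchon metric when $k=1$, or $\omega^{n-k}$ under the assumption (*)) and set $\Upsilon_\delta=\Upsilon+\delta\Upsilon_0$; then $\Upsilon_\delta$ is strictly positive and $\partial\overline{\partial}$-closed for all $\delta>0$, and $\int_X\Phi\wedge\Upsilon_\delta<0$ for $\delta$ small, contradicting the hypothesis.

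The crux, and the step I expect to be the main obstacle, is the weak-$*$ closedness of $\mathcal{P}+L$. Here the reference form $\Upsilon_0$ is indispensable: $\int_X T\wedge\Upsilon_0$ is a mass functional that is strictly positive on nonzero $T\in\mathcal{P}$ yet vanishes identically on $L$, since $\int_X dd^c R\wedge\Upsilon_0=\int_X R\wedge dd^c\Upsilon_0=0$. Given $P_j+dd^c\Psi_j\to S$ with $P_j\in\mathcal{P}$, I would split into two cases according to the masses $\int_X P_j\wedge\Upsilon_0$. If they stay bounded, then Banach--Alaoglu extracts a weak-$*$ limit $P\in\mathcal{P}$ of a subsequence of the $P_j$, whence $dd^c\Psi_j\to S-P$; the closedness of $L$, which follows from the finite-dimensionality and Hausdorffness of $\partial\overline{\partial}$-cohomology recorded in Section~\ref{BC-cohomology}, then gives $S-P\in L$ and $S\in\mathcal{P}+L$. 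If instead the masses diverge, rescaling $P_j$ by its mass produces a weak-$*$ limit $P_\infty\in\mathcal{P}$ of unit mass which is simultaneously $dd^c$-exact, forcing $\int_X P_\infty\wedge\Upsilon_0=0$ and contradicting unit mass. This dichotomy establishes closedness and completes the contradiction. Finally I would note that in the bidegrees $(1,1)$ and $(n-1,n-1)$ relevant to the applications in this paper the weak and strong positivity notions coincide, so the positivity subtleties of intermediate bidegrees do not intervene there.
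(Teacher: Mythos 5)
The paper itself offers no proof of this statement: it is Lamari's lemma, quoted verbatim from \cite{Lam99} in Section~\ref{lamari}, so there is no internal argument to compare against. Your proposal is, in substance, the known duality proof, but formulated on the dual side: Lamari extends the functional $\Upsilon\mapsto\int_X\Phi\wedge\Upsilon$, defined on the subspace $F=\ker\partial\overline{\partial}$ of smooth $(n-k,n-k)$-forms and nonnegative on its positive cone, to a positive current via the Hahn--Banach (M.~Riesz extension) theorem, and then uses only the weak-$*$ closedness of $\operatorname{im}(dd^c)$ to conclude that the extension differs from $\Phi$ by $dd^c\Psi$. You instead separate the point $\Phi$ from the cone $\mathcal{P}+L$ inside the space of currents, which is a legitimate dual formulation; its price is precisely the step you identify as the crux, namely proving that $\mathcal{P}+L$ is weak-$*$ closed, a step Lamari's formulation avoids altogether. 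Your mass-dichotomy argument for that closedness (bounded masses: extract a limit in $\mathcal{P}$ and use closedness of $L$; unbounded masses: normalize and obtain a unit-mass positive current lying in $\overline{L}$, hence killed by $\Upsilon_0$) is the standard and correct device.

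Two technical points deserve attention. First, the weak-$*$ topology on $\mathcal{D}'^{k,k}(X,\mathbb{R})$ is not metrizable, so closedness of a convex cone cannot be verified by sequences alone; you should either invoke the Krein--\v{S}mulian theorem to reduce to equicontinuous (mass-bounded) subsets, which are weak-$*$ metrizable since the space of smooth forms is a separable Fr\'echet space, and run your dichotomy there, or else phrase the argument with nets. This is a fixable but genuine omission. Second, your argument (like Lamari's) requires the existence of at least one strictly positive $\partial\overline{\partial}$-closed $(n-k,n-k)$-form $\Upsilon_0$, both to dominate the mass and to perturb the merely semi-positive separating form $\Upsilon$ into a strictly positive competitor. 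For $k=1$ this is exactly Gauduchon's theorem \cite{Gau77}, and $k=1$ is the only case the paper ever uses (via Michelsohn's theorem, Gauduchon metrics $g$ give all the test forms $g^{n-1}$); for intermediate $k$ on an arbitrary compact complex manifold such a form need not be available without further hypotheses such as the assumption (*), and this caveat is inherent in the general-$k$ statement itself rather than a defect of your argument. With these two repairs your proof is complete and is essentially the original one.
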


\subsection{Gauduchon metrics}\label{Gauduchon metric}
For any $n$-dimensional compact complex manifold $X$, Gauduchon's result \cite{Gau77} tells us there always exists a metric $\omega$ such that $\partial\overline{\partial}\omega^{n-1}=0$. These metrics   are called Gauduchon metrics. Actually, from \cite{Gau84} we know that in the conformal class of every Hermitian metric, there is a  Gauduchon metric.
As a consequence, if the Gauduchon metric $\omega$ satisfies the assumption (*), then for any closed $(1,1)$-current $T$, and $k\in\{1,\cdots,n\}$, the integral $\int_XT^k\wedge\omega^{n-k}$ only depends on the class of $T$ and the metric $\omega$, provided that $T^k$ is well-defined.

The following two  theorems, which we will state without proof,  will play  key roles in this paper.
\begin{thm}[\cite{Che87}]\label{Cherrier}
Let $(X,\omega)$ be a compact Hermitian manifold. The complex Monge-Amp\`{e}re equation
\begin{align}\label{MA2}
(\omega+i\partial\overline{\partial}\phi)^n=e^{\varepsilon\phi-F_\varepsilon}\omega^n
\end{align}
where $\varepsilon>0$ and $F_\varepsilon$ is a smooth function on $X$, has a smooth solution $\phi$ such that $\omega_\phi:=\omega+i\partial\overline{\partial}\phi>0$.
\end{thm}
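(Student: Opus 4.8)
The plan is to solve \eqref{MA2} by the classical continuity method, exploiting the zeroth-order term $e^{\varepsilon\phi}$ with $\varepsilon>0$, which simultaneously renders the linearized operator invertible and supplies the $C^0$ estimate for free. Introduce the family
\[
(\omega + i\partial\overline{\partial}\phi_t)^n = e^{\varepsilon\phi_t - tF_\varepsilon}\,\omega^n, \qquad t\in[0,1],
\]
and observe that $\phi_0\equiv 0$ solves the case $t=0$, while the case $t=1$ is exactly \eqref{MA2}. Let $S\subseteq[0,1]$ be the set of parameters $t$ for which a smooth solution $\phi_t$ with $\omega_{\phi_t}:=\omega+i\partial\overline{\partial}\phi_t>0$ exists. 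Since $0\in S$, it suffices to show that $S$ is open and closed in $[0,1]$.

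For openness I would apply the implicit function theorem to the operator $\mathcal{M}(\phi):=\log\!\left(\frac{(\omega+i\partial\overline{\partial}\phi)^n}{\omega^n}\right)-\varepsilon\phi$, acting on the open set of $C^{2,\alpha}$ functions with $\omega_\phi>0$. Its linearization at a solution $\phi_t$ is $\psi\mapsto \operatorname{tr}_{\omega_{\phi_t}}(i\partial\overline{\partial}\psi)-\varepsilon\psi$, a second-order elliptic operator whose zeroth-order coefficient $-\varepsilon$ is strictly negative. By the maximum principle its kernel is trivial: at an interior maximum of $\psi$ one has $\operatorname{tr}_{\omega_{\phi_t}}(i\partial\overline{\partial}\psi)\le 0$, hence $\psi\le 0$, and symmetrically $\psi\ge 0$. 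Standard elliptic theory then makes it an isomorphism $C^{2,\alpha}\to C^{0,\alpha}$, so $S$ is open.

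The substance lies in the closedness of $S$, which I would reduce to a priori estimates uniform in $t$. The $C^0$ bound is immediate: evaluating the $t$-equation at a maximum of $\phi_t$ gives $\omega_{\phi_t}^n\le\omega^n$, whence $\varepsilon\phi_t\le tF_\varepsilon\le\|F_\varepsilon\|_{C^0}$, and symmetrically at a minimum, so $\|\phi_t\|_{C^0}\le\varepsilon^{-1}\|F_\varepsilon\|_{C^0}$. The crucial estimate is the second-order one bounding $\operatorname{tr}_\omega\omega_{\phi_t}=n+\Delta_\omega\phi_t$; I would compute $\Delta_{\omega_{\phi_t}}\log\operatorname{tr}_\omega\omega_{\phi_t}$ and invoke the maximum principle \`a la Aubin--Yau. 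This is precisely where the Hermitian setting departs from the K\"{a}hler one, and is the main obstacle: since $d\omega\ne0$, the computation produces additional first-order torsion terms (involving $\partial\omega$, $\overline{\partial}\omega$ and $\partial\overline{\partial}\omega$) that are absent in the K\"{a}hler case. Following Cherrier, I would neutralize them by applying the maximum principle to $\log\operatorname{tr}_\omega\omega_{\phi_t}-A\phi_t$ for a large constant $A$, so that the good term $A\,\operatorname{tr}_{\omega_{\phi_t}}\omega$---which is bounded below away from zero by the determinant constraint and the $C^0$ bound via the arithmetic--geometric mean inequality---dominates the torsion terms; this yields $\operatorname{tr}_\omega\omega_{\phi_t}\le C$ with $C$ depending only on $\varepsilon$, $\|F_\varepsilon\|_{C^2}$, and the torsion and curvature of $\omega$. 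Combined with the $C^0$ bound and the determinant identity, this gives uniform upper and lower bounds for the eigenvalues of $\omega_{\phi_t}$ relative to $\omega$, i.e.\ uniform ellipticity; the gradient bound then follows by interpolation.

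With uniform ellipticity and the $C^2$ bound in hand, the equation is uniformly elliptic and concave in the complex Hessian, so the Evans--Krylov theorem upgrades the estimate to a uniform $C^{2,\alpha}$ bound, and differentiating the equation together with the Schauder estimates bootstraps this to uniform $C^{k,\alpha}$ bounds for every $k$. Consequently any sequence $t_j\to t_\infty$ in $S$ admits solutions $\phi_{t_j}$ subconverging in $C^\infty$ to a smooth limit $\phi_{t_\infty}$, and the uniform lower ellipticity bound guarantees $\omega_{\phi_{t_\infty}}>0$, so $t_\infty\in S$. Hence $S=[0,1]$, and the case $t=1$ furnishes the desired smooth solution of \eqref{MA2} with $\omega_\phi>0$.
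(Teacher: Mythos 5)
This theorem is stated in the paper \emph{without proof} --- it is quoted from Cherrier \cite{Che87} (``The following two theorems, which we will state without proof, will play key roles in this paper'') --- so there is no internal argument to compare yours against. Your continuity-method sketch is the standard proof of this result and essentially the one in the cited literature: the $\varepsilon$-term makes the linearization invertible and gives the $C^0$ bound for free, the torsion terms in the second-order estimate are the genuine difficulty (handled by the Cherrier-type maximum-principle argument on $\log\operatorname{tr}_\omega\omega_{\phi_t}-A\phi_t$ that you describe, whose delicate Cauchy--Schwarz bookkeeping was later streamlined by Guan--Li \cite{GuL10} and Tosatti--Weinkove \cite{TW10}), and Evans--Krylov plus bootstrapping closes the argument. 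The proposal is sound; just be aware that the step you compress into one sentence --- absorbing the torsion contributions so that no unmanageable gradient terms survive on the right-hand side --- is where all the real work in the Hermitian setting lies.
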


\begin{thm}[\cite{TW10}]\label{Monge-Ampere equation}
Let $(X,\omega)$ be a compact Hermitian manifold. For any smooth real-valued function $F$ on $X$, there exist a unique real number $C>0$ and a unique smooth real-valued function $\phi$ on $X$ solving
\begin{align}
(\omega+i\partial\overline{\partial}\phi)^n=Ce^F\omega^n,\notag
\end{align}
with $\omega+i\partial\overline{\partial}\phi>0$ and $\sup_X\phi=0$.  Furthermore, if $\partial\overline{\partial}\omega^k=0$ for $1\leq k\leq n-1$, then we have
\begin{align}
C=\frac{\int_X\omega^n}{\int_Xe^F\omega^n}.\notag
\end{align}
\end{thm}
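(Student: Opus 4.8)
\section*{Proof proposal for Theorem \ref{Monge-Ampere equation}}

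The plan is to solve the equation by the continuity method, the crucial structural point being that, since $\omega$ is not closed, the cohomological identity $\int_X\omega_\phi^n=\int_X\omega^n$ that pins down the constant in Yau's theorem is unavailable; I therefore treat $C>0$ as a genuine unknown coupled to $\phi$. Introduce the family
\[
(\omega+i\partial\overline{\partial}\phi_t)^n=C_t\,e^{tF}\omega^n,\qquad \omega_{\phi_t}>0,\quad \sup_X\phi_t=0,\qquad t\in[0,1],
\]
which at $t=0$ has the obvious solution $(\phi_0,C_0)=(0,1)$ and at $t=1$ is the desired equation. Let $S\subseteq[0,1]$ be the set of $t$ admitting a smooth solution $(\phi_t,C_t)$; I would show that $S$ is nonempty, open and closed, hence equal to $[0,1]$.

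For openness I would apply the implicit function theorem in H\"{o}lder spaces to the map $(\phi,C)\mapsto\log\big((\omega+i\partial\overline{\partial}\phi)^n/\omega^n\big)-\log C$. Linearizing at a solution sends $(\psi,c)$ to $\Delta_{\omega_\phi}\psi-c$, where $\Delta_{\omega_\phi}$ is the complex (Chern) Laplacian of $\omega_\phi$. On a compact Hermitian manifold this operator is elliptic with kernel the constants and a one-dimensional cokernel, and the extra scalar unknown $c$ arising from $\log C$ is precisely what is needed to surject onto that cokernel, so the linearization is an isomorphism after imposing the normalization $\sup\psi=0$; this yields openness. Uniqueness I would obtain from the maximum principle: evaluating the equation at the maximum and minimum points of $\phi_1-\phi_2$ forces $C_1=C_2$, after which $\omega_{\phi_1}^n=\omega_{\phi_2}^n$, and the strong maximum principle applied to the linear elliptic equation satisfied by $\phi_1-\phi_2$ (with positive-definite coefficients obtained by interpolating the two determinants) gives $\phi_1-\phi_2\equiv\mathrm{const}$, which the normalization forces to be $0$.

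Closedness is the analytic heart and rests on a priori estimates uniform in $t$. First a $C^0$ estimate: bounding $C_t$ is easy from the maximum principle (at the maximum of $\phi_t$ one has $\omega_{\phi_t}\le\omega$, hence $C_t e^{tF}\le1$ there), whereas the uniform bound $\|\phi_t\|_{C^0}\le C_1$ requires a Moser iteration adapted to the Hermitian setting in the spirit of Cherrier, since the integration-by-parts identities in Yau's original argument pick up torsion terms once $d\omega\ne0$. Next a second-order estimate bounding $\mathrm{tr}_\omega\omega_{\phi_t}$ in terms of $\|\phi_t\|_{C^0}$, via the maximum principle applied to a quantity such as $\log\mathrm{tr}_\omega\omega_{\phi_t}-A\phi_t$; here again the non-closedness of $\omega$ produces additional first-order and torsion contributions in the Bochner-type computation that must be absorbed by the negative terms, and this is the step I expect to be the \emph{main obstacle}. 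Together with the lower bound on the determinant coming from the bounds on $C_t$ and $F$, this makes $\omega_{\phi_t}$ uniformly equivalent to $\omega$, so the equation becomes uniformly elliptic and concave in $i\partial\overline{\partial}\phi$; the Evans--Krylov theorem then yields uniform $C^{2,\alpha}$ bounds, and Schauder bootstrapping gives uniform $C^\infty$ bounds. Arzel\`{a}--Ascoli then lets solutions along any $t_j\to t_*$ converge to a solution at $t_*$, so $S$ is closed and $S=[0,1]$.

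Finally, for the formula for $C$ under assumption (*), I would integrate the equation to obtain $\int_X\omega_\phi^n=C\int_Xe^F\omega^n$ and then prove $\int_X\omega_\phi^n=\int_X\omega^n$. Expanding $\omega_\phi^n=\sum_{k=0}^n\binom{n}{k}\omega^{n-k}\wedge(i\partial\overline{\partial}\phi)^k$, it suffices to show that each term with $k\ge1$ integrates to zero. For such a term I would integrate by parts twice, moving $i\partial\overline{\partial}$ onto the factor $\eta:=\omega^{n-k}\wedge(i\partial\overline{\partial}\phi)^{k-1}$; since $(i\partial\overline{\partial}\phi)^{k-1}$ is $d$-closed and $\partial\overline{\partial}\omega^{n-k}=0$ for $1\le n-k\le n-1$ by (*), the Leibniz rule gives $i\partial\overline{\partial}\eta=0$, whence $\int_X\omega^{n-k}\wedge(i\partial\overline{\partial}\phi)^k=\int_X\phi\,i\partial\overline{\partial}\eta=0$. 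This yields $C=\int_X\omega^n/\int_Xe^F\omega^n$ and completes the proof.
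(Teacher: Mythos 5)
The paper offers no proof of this statement for you to be compared against: it is quoted from Tosatti--Weinkove \cite{TW10} and explicitly ``stated without proof'', so your proposal can only be measured against the argument of that reference. Measured that way, your skeleton is the right one and is essentially theirs: the continuity method with the pair $(\phi_t,C_t)$ as coupled unknowns, openness by the implicit function theorem with the extra scalar unknown absorbing the one-dimensional cokernel of the (non-self-adjoint) Chern Laplacian, uniqueness of both $C$ and $\phi$ by evaluating at the extrema of $\phi_1-\phi_2$, closedness by a priori estimates plus Evans--Krylov and Schauder bootstrapping, and, under assumption (*), the identification of $C$ by integrating the equation. That last step --- the only part of the statement where assumption (*) enters, and the part this paper actually relies on --- you prove completely and correctly: expanding $\omega_\phi^n$ and moving $i\partial\overline{\partial}$ onto $\eta=\omega^{n-k}\wedge(i\partial\overline{\partial}\phi)^{k-1}$ kills every term with $k\geq 1$, since $(i\partial\overline{\partial}\phi)^{k-1}$ is $d$-closed and $\partial\overline{\partial}\omega^{n-k}=0$ for $1\leq n-k\leq n-1$.

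The one genuine weakness is where you locate the analytic difficulty. You flag the second-order estimate as the expected main obstacle; in fact that estimate (a bound on $\mathrm{tr}_\omega\omega_{\phi_t}$ in terms of the oscillation of $\phi_t$, torsion terms included) was already available from Cherrier and Guan--Li before \cite{TW10}. The real crux --- and the actual new content of \cite{TW10} --- is the uniform $C^0$ estimate for an \emph{arbitrary} Hermitian $\omega$, which you dispatch with ``a Moser iteration adapted to the Hermitian setting in the spirit of Cherrier''. Taken literally this would fail: Cherrier's iteration only closes when the torsion error terms it produces can be controlled, which he could do in complex dimension $2$ or under extra hypotheses on $\omega$, and which Guan--Li \cite{GuL10} could do precisely under assumption (*); for general $\omega$ those error terms are exactly what kept the problem open between 1987 and 2010, and Tosatti--Weinkove had to introduce a new ingredient into the iteration to handle them. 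So read as a proof, your proposal has a gap at the $C^0$ estimate --- a gap filled by the cited literature rather than a wrong turn --- while if one only wants the theorem under assumption (*), which is the only case this paper uses, the older Cherrier/Guan--Li estimates suffice and your sketch can be closed as written.
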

\begin{rem}
The assumption (*) is also used in \cite{GuL10} to solve the complex Monge-Amp\`{e}re equation.
\end{rem}
\begin{rem}

 One should be careful that if $\omega$ is Gauduchon, and $\phi $ is a smooth function on $X$ such that  $\omega_\phi:=\omega+i\partial\overline{\partial}\phi>0$, then $\omega_\phi$ is not Gauduchon in general.

\end{rem}
\begin{lem}\label{assumption lemma}
Suppose $\omega$ is a Hermitian form on $X$ satisfying  the assumption (*). Then for any smooth function $\phi$ on $M$, $\omega_\phi$ also satisfies the assumption (*).
\end{lem}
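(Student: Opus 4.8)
The plan is to expand $\omega_\phi^{\,k}=(\omega+i\partial\overline{\partial}\phi)^k$ by the binomial formula and to check that $\partial\overline{\partial}$ annihilates every term. Write $\alpha:=i\partial\overline{\partial}\phi$, a real $(1,1)$-form. The first fact I would record is that $\alpha$ is $d$-closed: indeed $\partial\alpha=i\partial\partial\overline{\partial}\phi=0$ and $\overline{\partial}\alpha=i\overline{\partial}\partial\overline{\partial}\phi=-i\partial\overline{\partial}\overline{\partial}\phi=0$, using $\partial^2=\overline{\partial}^2=0$ and $\overline{\partial}\partial=-\partial\overline{\partial}$. Consequently each wedge power $\alpha^j$ is $d$-closed as well, so that $\partial\alpha^j=\overline{\partial}\alpha^j=\partial\overline{\partial}\alpha^j=0$, since every term produced by applying $\partial$ or $\overline{\partial}$ to $\alpha^j$ via the Leibniz rule carries a factor $\partial\alpha$ or $\overline{\partial}\alpha$.

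The second ingredient is a Leibniz-type identity for the operator $\partial\overline{\partial}$ on a product $\beta\wedge\gamma$ with $\beta$ of even degree, namely
\begin{align*}
\partial\overline{\partial}(\beta\wedge\gamma)=\partial\overline{\partial}\beta\wedge\gamma-\overline{\partial}\beta\wedge\partial\gamma+\partial\beta\wedge\overline{\partial}\gamma+\beta\wedge\partial\overline{\partial}\gamma,
\end{align*}
which I would derive by applying $\overline{\partial}$ and then $\partial$ and tracking the signs. Taking $\beta=\omega^{k-j}$ (even degree) and $\gamma=\alpha^j$, and invoking $\partial\alpha^j=\overline{\partial}\alpha^j=\partial\overline{\partial}\alpha^j=0$ from the previous step, the last three terms drop out, leaving the clean relation $\partial\overline{\partial}(\omega^{k-j}\wedge\alpha^j)=\partial\overline{\partial}(\omega^{k-j})\wedge\alpha^j$.

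Combining these, for $1\le k\le n-1$ the binomial expansion yields
\begin{align*}
\partial\overline{\partial}\omega_\phi^{\,k}=\sum_{j=0}^{k}\binom{k}{j}\,\partial\overline{\partial}(\omega^{k-j})\wedge\alpha^j.
\end{align*}
In each summand the exponent satisfies $0\le k-j\le k\le n-1$. When $k-j=0$ the factor is $\partial\overline{\partial}(1)=0$, and when $1\le k-j\le n-1$ the assumption (*) for $\omega$ gives $\partial\overline{\partial}\omega^{k-j}=0$. Hence every term vanishes and $\partial\overline{\partial}\omega_\phi^{\,k}=0$, which is precisely assumption (*) for $\omega_\phi$.

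I do not expect a genuine obstacle here: the only points requiring care are the sign bookkeeping in the Leibniz identity for $\partial\overline{\partial}$ and the verification that $\alpha=i\partial\overline{\partial}\phi$ is $d$-closed. Once these are in place the conclusion is purely formal. The structural reason the lemma holds is that the perturbation $i\partial\overline{\partial}\phi$ is itself closed, so it behaves like a constant with respect to $\partial\overline{\partial}$ and cannot disturb the vanishing guaranteed by assumption (*).
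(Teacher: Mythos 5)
Your argument is correct and is precisely the ``direct and easy computation'' that the paper leaves to the reader: expand $(\omega+i\partial\overline{\partial}\phi)^k$ binomially, note that $i\partial\overline{\partial}\phi$ is $d$-closed, and use the Leibniz rule to reduce $\partial\overline{\partial}$ of each term to $\partial\overline{\partial}(\omega^{k-j})\wedge(i\partial\overline{\partial}\phi)^j$, which vanishes by assumption (*). As a small bonus, your proof only uses that the perturbation is a closed real $(1,1)$-form, which is exactly the generality in which the lemma is later invoked in the proof of Lemma \ref{blow-up}, where the correction term $\sum\varepsilon_j\widetilde{u}_j$ is $d$-closed but not globally $i\partial\overline{\partial}$-exact.
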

\begin{proof}
It is  a direct and easy computation.
\end{proof}

\subsection{Finiteness of the volume}
The following two lemmas are  small generalizations of those in \cite{Bou02}. The proof is similar with small modifications, we give the proof here for the sake of completeness.
\begin{lem}\label{Bouksom 1}
Let $T$ be any closed  $(1,1)$-current on a compact Hermitian manifold $(X,\omega)$ with $T\geq \gamma$, where $\omega$ is the Gauduchon metric satisfying the assumption (*) and $\gamma$ is a continuous  $(1,1)$-form on $X$. Then one can define the Lelong number $\nu(T,x)$ for $T$ at $x$ to be $\nu(T+\beta,x)$, where $\beta$ is a smooth closed $(1,1)$-form near $x$ such that $T+\beta\geq 0$ and   $\nu(T,x)$  can be bounded by a constant depending only on the $\partial\overline{\partial}$-cohomology class of $T$.
\end{lem}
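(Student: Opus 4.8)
The plan is to prove the two assertions separately: first that the number $\nu(T,x):=\nu(T+\beta,x)$ is independent of the auxiliary form $\beta$, and then that it admits a bound depending only on the class $\{T\}$. Throughout I use that the Lelong number of a closed positive $(1,1)$-current coincides with the Lelong number of any of its local plurisubharmonic potentials.

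For well-definedness I would work on a small coordinate ball $B=B(x,R)$ on which $T$ is closed, and invoke the local $dd^c$-Poincar\'e lemma to write $T=dd^c\varphi$ for a real current $\varphi$ on $B$. If $\beta_1,\beta_2$ are two smooth closed $(1,1)$-forms on $B$ with $T+\beta_i\geq 0$, then each $\beta_i=dd^ch_i$ for a smooth real function $h_i$ (smooth closed real $(1,1)$-forms are $dd^c$-exact on a ball), so $T+\beta_i=dd^c(\varphi+h_i)\geq 0$ exhibits $u_i:=\varphi+h_i$ as plurisubharmonic, whence $\nu(T+\beta_i,x)=\nu(u_i,x)$. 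Since $u_1-u_2=h_1-h_2$ is smooth, the elementary identity $\nu(u+g,x)=\nu(u,x)$ for smooth $g$ (immediate from $\nu(u,x)=\liminf_{z\to x}u(z)/\log|z-x|$ together with $g(z)/\log|z-x|\to 0$) gives $\nu(T+\beta_1,x)=\nu(T+\beta_2,x)$. Existence of such a $\beta$ is clear: since $T\geq\gamma\geq -C_0\omega$ on $X$ for some constant $C_0$ and $\omega\leq A\,dd^c|z|^2$ on $B$ for $A$ large, the exact form $\beta:=C_0A\,dd^c|z|^2$ satisfies $T+\beta\geq 0$.

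For the quantitative bound I would combine Lelong's monotonicity of the normalized mass with the Gauduchon invariance of the total mass. Fix, by compactness, a uniform radius $R_0$, work on $B=B(x,R_0)$ with the same $\beta=C_0A\,dd^c|z|^2$, and set $S:=T+\beta$, which is closed and positive on $B$. Monotonicity of $r\mapsto r^{-(2n-2)}\int_{B(x,r)}S\wedge(dd^c|z|^2)^{n-1}$ yields $\nu(T,x)=\nu(S,x)\leq c\,R_0^{-(2n-2)}\int_{B}S\wedge(dd^c|z|^2)^{n-1}$ for a dimensional constant $c$. As $(dd^c|z|^2)^{n-1}\leq C\,\omega^{n-1}$ as strongly positive forms on $B$ and $S\geq 0$, this is at most $cC\,R_0^{-(2n-2)}\int_B S\wedge\omega^{n-1}$; splitting $S=T+\beta$, the smooth term $\int_B\beta\wedge\omega^{n-1}$ is bounded by a constant, so it remains to control $\int_B T\wedge\omega^{n-1}$. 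Here the global lower bound enters: from $T+C_0\omega\geq 0$ the signed measure $\mu:=T\wedge\omega^{n-1}$ satisfies $\mu+C_0\omega^n\geq 0$ on $X$, whence $\int_B T\wedge\omega^{n-1}\leq\int_X(\mu+C_0\omega^n)=\int_X T\wedge\omega^{n-1}+C_0\int_X\omega^n$. Finally, $\int_X T\wedge\omega^{n-1}$ depends only on $\{T\}$: for $T'=T+dd^c\psi$ one gets $\int_X(T'-T)\wedge\omega^{n-1}=\int_X\psi\,dd^c\omega^{n-1}=0$ by Stokes and $\partial\overline{\partial}\omega^{n-1}=0$, which is part of assumption (*). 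Assembling these inequalities gives $\nu(T,x)\leq c'\bigl(\int_X T\wedge\omega^{n-1}+C_0\int_X\omega^n+1\bigr)$ uniformly in $x$, with the right-hand side determined by $\{T\}$, $\omega$ and $\gamma$.

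The main obstacle is the non-closedness of $\omega$: because $T+C_0\omega$ is not closed, Lelong's monotonicity cannot be applied to it directly, so I must pass to the genuinely closed local comparison currents $S=T+\beta$ with \emph{exact} $\beta$, and exploit $\partial\overline{\partial}\omega^{n-1}=0$ separately, only at the level of the global mass integral. Keeping these two distinct uses of positivity and of the Gauduchon hypothesis cleanly apart is the crux of the argument.
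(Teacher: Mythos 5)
Your proof is correct and follows essentially the same route as the paper's: bound $\nu(T,x)$ by the normalized mass of a positive comparison current on a ball of fixed radius via Lelong's monotonicity, then convert the local mass bound into the global quantity $\int_X T\wedge\omega^{n-1}+C\int_X\omega^n$, which depends only on $\{T\}$ because $\partial\overline{\partial}\omega^{n-1}=0$. Your execution is in fact slightly more careful than the paper's literal phrasing, since you apply monotonicity only to the genuinely closed current $T+\beta$ with exact $\beta$ and compare $(dd^c|z|^2)^{n-1}$ with $\omega^{n-1}$ afterwards, whereas the paper states the monotonicity directly for the $\omega^{n-1}$-mass of $T+C\omega$ ``up to a constant depending on $\omega$ near $x$.''
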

\begin{proof}
By definition $\nu(T+\beta,x) $ is (up to a constant  depending on $\omega$ near $x$) the limit for $r\rightarrow 0_+$ of
\begin{align*}
\nu(T+\beta,x,r):=\frac{(n-1)!}{(\pi r^2)^{n-1}}\int_{B(x,r)}(T+\beta)\wedge\omega^{n-1},
\end{align*}
which is known to be an increasing function of $r$. Since $\beta$ is smooth, one can see that the limit is independent of $\beta$, which means that the definition of $\nu(T,x)$ is well-defined. Choose a constant $C$ such that $C\omega\geq -\gamma$, then $T+C\omega\geq 0$ on $X$. Thus if we choose $r_0$ small enough to ensure that each ball $B(x,r_0)$ is contained in a coordinate chart, we get $\nu(T,x)\leq \nu(T+C\omega,x,r_0)\leq \int_X(T+C\omega)\wedge \omega^{n-1}=\int_XT\wedge\omega^{n-1}+C\int_X\omega^n$. But the last term is  a quantity depending on the cohomology class $\{T\}$ since $\omega$ satisfies the assumption (*).
\end{proof}

\begin{lem}\label{Boucksom 2}
Under the same assumption as in Lemma \ref{Bouksom 1},
one sees that the integrals $\int_XT^k_{ac}\wedge\omega^{n-k}$ are finite for each $k=0,\cdots,n$ and can  be bounded in terms of $\omega$ and the $\partial\overline{\partial}$-cohomology class of $T$ only.
\end{lem}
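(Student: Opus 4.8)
The plan is to reduce the statement to controlling the mass of the \emph{nonnegative} measures $S^j\wedge\omega^{n-j}$, where $S:=T_{ac}+A\omega$ for a suitable constant $A>0$, and then to bound this mass by a cohomological quantity by combining Demailly's regularization with the rigidity coming from the assumption (*). Since $\gamma$ is continuous and $X$ is compact, I first choose $A>0$ with $\gamma+A\omega\ge 0$; as $T\ge\gamma$ forces $T_{ac}\ge\gamma$ (Subsection \ref{Lebesgue decomposition}), the $L^1_{loc}$-form $S=T_{ac}+A\omega$ is $\ge 0$ almost everywhere. Expanding binomially, $T_{ac}^k\wedge\omega^{n-k}=(S-A\omega)^k\wedge\omega^{n-k}=\sum_{j=0}^k\binom{k}{j}(-A)^{k-j}\,S^j\wedge\omega^{n-j}$ pointwise a.e., so it suffices to show that each $\int_XS^j\wedge\omega^{n-j}$ ($0\le j\le n$) is finite and bounded in terms of $\omega$ and $\{T\}$; the finiteness and the bound for $\int_XT_{ac}^k\wedge\omega^{n-k}$ then follow by the triangle inequality, as each $S^j\wedge\omega^{n-j}\ge 0$.

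For the key bound I would apply the regularization Theorem \ref{regularization}(i) to $T$, producing smooth closed forms $\theta_m\in\{T\}$ with $\theta_m\to T_{ac}$ a.e.\ and $\theta_m\ge\gamma-C\lambda_m\omega$, where the $\lambda_m$ are continuous and decreasing. Because $\lambda_m\le\lambda_1$ and $\lambda_1$ is bounded on the compact manifold $X$, after enlarging $A$ if necessary I may assume $\theta_m\ge -A\omega$ for every $m$, so that $S_m:=\theta_m+A\omega$ is a \emph{smooth nonnegative} form with $S_m\to S$ a.e. Since the wedge powers depend continuously on the coefficients, $S_m^j\wedge\omega^{n-j}\to S^j\wedge\omega^{n-j}$ a.e., and all of these are nonnegative measures; Fatou's lemma then yields $\int_XS^j\wedge\omega^{n-j}\le\liminf_m\int_XS_m^j\wedge\omega^{n-j}$.

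It remains to see that the right-hand side is cohomological. Expanding $S_m^j\wedge\omega^{n-j}=(\theta_m+A\omega)^j\wedge\omega^{n-j}=\sum_{l=0}^j\binom{j}{l}A^{j-l}\,\theta_m^l\wedge\omega^{n-l}$ and integrating, each $\theta_m$ is a smooth closed representative of $\{T\}$, so the assumption (*) guarantees that $\int_X\theta_m^l\wedge\omega^{n-l}$ depends only on the class $\{T\}$ and on $\omega$, and in particular is independent of $m$. Hence $\int_XS_m^j\wedge\omega^{n-j}=\sum_{l=0}^j\binom{j}{l}A^{j-l}\int_X\theta_m^l\wedge\omega^{n-l}=:B_j$ is a fixed number determined by $\{T\}$, $\omega$ and $A$, so $\int_XS^j\wedge\omega^{n-j}\le B_j<\infty$. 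Feeding this back into the binomial expansion of the first paragraph gives $\bigl|\int_XT_{ac}^k\wedge\omega^{n-k}\bigr|\le\sum_{j=0}^k\binom{k}{j}A^{k-j}B_j$, which is finite and bounded solely in terms of $\omega$, the lower bound $\gamma$, and the class $\{T\}$, as desired.

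The delicate point, and the reason the lemma is not a triviality, is that $T_{ac}$ has merely $L^1$ coefficients, so a priori none of the products $T_{ac}^k$ (equivalently $S^k$) need be locally integrable: the entire content is to establish this integrability together with a bound that sees only the cohomology class. Two ingredients make this work and are where I expect the difficulty to concentrate. First, the passage to the limit must be carried out on the \emph{nonnegative} integrands $S_m^j\wedge\omega^{n-j}$ so that Fatou's lemma applies; this is precisely why I shift by $A\omega$ to reach positivity \emph{before} taking powers, and why the uniform lower bound $\theta_m\ge -A\omega$, coming from the monotonicity of the continuous functions $\lambda_m$, is needed. Second, the assumption (*) is indispensable: it freezes each $\int_X\theta_m^l\wedge\omega^{n-l}$ at its cohomological value, and without it these integrals could drift with $m$ and the bound would no longer be controlled by $\{T\}$ alone.
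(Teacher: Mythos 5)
Your overall strategy coincides with the paper's: shift $T_{ac}$ by a multiple of $\omega$ to reach a nonnegative form, regularize via Theorem \ref{regularization}(i), pass to the limit with Fatou's lemma on the nonnegative integrands, and invoke the assumption (*) to freeze the smooth integrals $\int_X\theta_m^l\wedge\omega^{n-l}$ at their cohomological values. The finiteness assertion is correctly established this way (your binomial expansion simply replaces the paper's eigenvalue inequality $|\lambda|\le\lambda+2C$, which is immaterial).

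There is, however, a gap in the second half of the statement, namely that the bound depends only on $\omega$ and the class $\{T\}$. You arrange $\theta_m\ge -A\omega$ for all $m$ by bounding $\lambda_m\le\lambda_1$ and using that $\lambda_1$ is continuous, hence bounded, on the compact $X$. But $\lambda_1$ is an artifact of the regularization of the particular current $T$: nothing in Theorem \ref{regularization} controls $\sup_X\lambda_1$ in terms of the cohomology class, so your constants $A$ and $B_j$ depend on $T$ itself (indeed on the chosen regularization), not only on $\{T\}$ and $\gamma$. This is precisely what Lemma \ref{Bouksom 1} --- whose hypotheses the statement explicitly carries over, and which you never invoke --- is for: it bounds the Lelong numbers $\nu(T,x)$ by a constant $C'$ depending only on $\{T\}$ and $\omega$; since the $\lambda_m$ decrease pointwise to $\nu(T,\cdot)\le C'$, Dini's theorem applied to $\max(\lambda_m-C'-1,0)$ gives $\lambda_m\le C'+2$ uniformly on $X$ for $m$ large, and discarding the first finitely many terms does not affect the $\liminf$ in Fatou's lemma. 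This uniformity is not cosmetic: in the proof of Theorem \ref{Grauert-Riemenschneider} the lemma is applied to a whole sequence $T_k$ of currents in a fixed class $\alpha$ with $T_k\ge-\varepsilon_k\omega$, and the resulting bounds must be uniform in $k$, which your argument as written does not deliver.
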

\begin{proof}Since $\gamma$ is continuous and $X$ is compact, there exists a constant $C>0$, such that $T\geq -C\omega$, and thus $T_{ac}\geq -C\omega$. Let $-C\leq\lambda_1\leq\cdots\leq\lambda_n$ be the eigenvalues of $T_{ac}$. There is  a simple observation:
whenever $\lambda_k$ is negative or positive,  $|\lambda_k|\leq \lambda_k+2C$ always holds. Thus
\begin{align*}
\big|\int_XT^k_{ac}\wedge\omega^{n-k}\big|\leq \int_X(T_{ac}+2C\omega)^k\wedge\omega^{n-k}.
\end{align*}
It suffices to prove the right hand side of above inequality is uniformly bounded.  Choose a sequence $T_k$ of smooth forms approximating $T$ as in Theroem \ref{regularization}.  Since $T_k\geq -C\omega-C\lambda_k\omega=-C(1+\lambda_k)\omega$ for some constant $C>0$ depending only on $(X,\omega)$ only and continuous functions $\lambda_k(x)$ decreasing to $\nu(T,x)$, we find using Lemma \ref{Bouksom 1} a constant also denoted by $C$ and depending on $(X,\omega)$ and the cohomology class $\{T\}$ only such that $T_k+C\omega\geq 0$. But now
\begin{align*}
\int_X(T_k+C\omega)^l\wedge\omega^{n-l}=\int_X(\{T\}+C\omega)^l\omega^{n-l}
\end{align*}
does not depend on $k$ since our $\omega$ satisfies the assumption (*), so the result follows by Fatou's lemma, since $T_k+C\omega$ is a smooth form converging to $T_{ac}+C\omega$ a.e.
\end{proof}

\subsection{Harder-Narasimhan filtration on compact Hermitian manifold}
In this subsection, we refer  to Bruasse \cite{Bru01}.
Let $(X,\omega)$ be a compact Hermitian manifold endowed with a Gauduchon metric $\omega$. Let $L$ be a holomorphic line bundle on $X$ and $h$ be a Hermitian metric on $L$. Let $\Theta_{L,h}$ be the Chern curvature form of $L$ associated to $h$. Since it is independent of $h$ up to a $\partial\overline{\partial}$-exact term and $\omega$ is Gauduchon, the $\omega$-degree of $L$ given by
\begin{align*}
\deg_\omega(L)=\int_X\Theta_{L,h}\wedge \omega^{n-1}
\end{align*}
is a well-defined real number independent of $h$.

Now if $\mathcal{F}$ is a rank $p$ coherent sheaf of $\mathcal{O}_X$-modules, consider the holomorphic line bundle $\det\mathcal{F}=(\wedge^p\mathcal{F})^{**}$. Then we have
\begin{defin}
\begin{itemize}
\item [(i)]  The $\omega$-degree of $\mathcal{F}$ is
\begin{align*}
\deg_\omega(\mathcal{F}):=\deg_\omega(\det \mathcal{F}).
\end{align*}
\item [(ii)] If $\mathcal{F}$ is nontrivial and torsion-free, then we define its slope (or $\omega$-slope) by
    \begin{align*}
    \mu(\mathcal{F}):=\frac{\deg_\omega(\mathcal{F})}{\mbox{rank}(\mathcal{F})}.
    \end{align*}

\end{itemize}
\end{defin}

\begin{defin} A torsion-free coherent sheaf $\mathcal{E}$ is called $\omega$-(semi) stable if for every coherent subsheaf $\mathcal{F}\subset \mathcal{E}$ with $0<\mbox{rank} \mathcal{F}<\mbox{rank}\mathcal{E}$, one has $\mu(\mathcal{F})<(\leq )\mu(\mathcal{E})$.
\end{defin}

\begin{defin}
Let $(X,\omega)$ be a compact complex manifold of dimension $n$ endowed with a Gauduchon metric $\omega$. Let $\mathcal{F}$ be a torsion free coherent sheaf over $X$. A Harder-Narasimhan filtration for $\mathcal{F}$ is a flag:
\begin{align*}
0=\mathcal{F}_0\subset \mathcal{F}_1\subset \cdots\subset \mathcal{F}_{s-1}\subset \mathcal{F}_s=\mathcal{F}
\end{align*}
of subsheaves of $\mathcal{F}$ with the following two properties:
\begin{itemize}
\item[(1)] $\mathcal{F}_i/\mathcal{F}_{i-1}$ is $\omega$-semi-stable for $1\leq i\leq s-1$,
\item[(2)]$\mu(\mathcal{F}_{j+1}/\mathcal{F}_j)<\mu(\mathcal{F}_j/\mathcal{F}_{j-1})$ for $1\leq j\leq s-1$.
\end{itemize}
In fact, $\mathcal{F}_i/\mathcal{F}_{i-1}$ is the maximal $\omega$-semi-stable subsheaf of $\mathcal{F}/\mathcal{F}_{i-1}$ for $1\leq i\leq s-1$.
 \end{defin}

\begin{thm}[\cite{Bru01}]
Let $(X,\omega)$ be a compact complex manifold of dimension $n$ endowed with a Gauduchon metric $\omega$. Let $E$ be a holomorphic vector bundle of rank $r$ over $X$. It possesses a unique Harder-Narasimhan filtration.
\end{thm}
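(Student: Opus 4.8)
The plan is to follow the classical construction of the Harder--Narasimhan filtration, the only genuine novelty over the projective case being that $\deg_\omega$ takes real rather than integral values, so that the attainment of the relevant supremum becomes an analytic rather than a combinatorial matter. I would reduce the entire statement to the existence and uniqueness of a \emph{maximal destabilizing subsheaf} $\mathcal{F}_1\subset E$, that is, a nonzero coherent subsheaf of maximal slope $\mu_{\max}(E):=\sup\{\mu(\mathcal{F}):0\neq\mathcal{F}\subset E\}$ and, among those, of maximal rank. Granting its existence and uniqueness, one sets $\mathcal{F}_1$ as the first step of the flag, passes to the torsion-free quotient $E/\mathcal{F}_1$, which has strictly smaller rank, applies the inductive hypothesis to obtain its Harder--Narasimhan filtration, and pulls the latter back to a flag in $E$; the required slope inequalities then follow from the additivity of degree together with the semistability of the graded pieces. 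Uniqueness of the whole filtration is established in tandem: the first nonzero term $\mathcal{G}_1$ of any Harder--Narasimhan filtration must coincide with the maximal destabilizing subsheaf, after which one concludes by induction on $E/\mathcal{G}_1$.

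The algebraic backbone consists of three elementary facts. First, $\deg_\omega$ is additive on short exact sequences $0\to S\to E\to Q\to 0$: indeed $\det E=\det S\otimes\det Q$, and $\deg_\omega$ adds under tensor products of line bundles since the curvature of a tensor product is the sum of the curvatures; here the Gauduchon condition $\partial\overline{\partial}\omega^{n-1}=0$ is exactly what makes $\deg_\omega$ a well-defined real number. Second, applying additivity to $0\to\mathcal{F}\cap\mathcal{G}\to\mathcal{F}\oplus\mathcal{G}\to\mathcal{F}+\mathcal{G}\to 0$ yields the modular relation $\deg_\omega(\mathcal{F})+\deg_\omega(\mathcal{G})=\deg_\omega(\mathcal{F}\cap\mathcal{G})+\deg_\omega(\mathcal{F}+\mathcal{G})$; in particular, if $\mathcal{F}$ and $\mathcal{G}$ both realize $\mu_{\max}(E)$ then so does $\mathcal{F}+\mathcal{G}$, because $\mu(\mathcal{F}\cap\mathcal{G})\le\mu_{\max}(E)$ forces $\mu(\mathcal{F}+\mathcal{G})\ge\mu_{\max}(E)$ and hence equality. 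Third, one may restrict attention to saturated (hence reflexive) subsheaves, since the saturation $\widehat{\mathcal{F}}$ has the same rank and $\deg_\omega(\widehat{\mathcal{F}})\ge\deg_\omega(\mathcal{F})$ (the quotient $\widehat{\mathcal{F}}/\mathcal{F}$ is torsion, contributing a nonnegative degree), so passing to saturations does not alter $\mu_{\max}(E)$.

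The analytic heart of the argument --- and the step I expect to be the main obstacle --- is to show that $\mu_{\max}(E)$ is finite and is attained. Finiteness comes from Chern--Weil theory: a saturated subsheaf $\mathcal{F}$ is locally free off an analytic set of codimension $\ge 2$, and on that locus the metric induced by $h$ gives, via the Gauss--Codazzi equation, $\mathrm{tr}(i\Theta_{\mathcal{F}})\wedge\omega^{n-1}=\mathrm{tr}(\pi_{\mathcal{F}}\,i\Theta_{E,h})\wedge\omega^{n-1}-|\beta|^{2}\,\omega^{n}$, where $\beta$ is the second fundamental form; discarding the nonpositive second-fundamental-form term bounds $\deg_\omega(\mathcal{F})$ by a constant depending only on the eigenvalues of $i\Theta_{E,h}$, the volume, and $\mathrm{rank}\,\mathcal{F}\le r$, whence $\mu_{\max}(E)<\infty$. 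The attainment is where the real-valued, transcendental nature of $\deg_\omega$ blocks the discreteness argument available in the projective setting: instead I would take a maximizing sequence of saturated subsheaves of a fixed rank $p$, represent each by its orthogonal projection $\pi_k$ viewed as an $L^{2}_{1}$ section of $\mathrm{End}\,E$, use the uniform curvature and degree bounds to obtain a uniform $L^{2}_{1}$ estimate on the $\pi_k$, extract a weak limit $\pi_\infty$, and invoke the regularity theory for weakly holomorphic subbundles (the Uhlenbeck--Yau type argument, in the form carried out by Bruasse) to identify $\pi_\infty$ with the projection onto a genuine coherent saturated subsheaf whose slope is $\ge\mu_{\max}(E)$, hence $=\mu_{\max}(E)$.

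Finally, among the subsheaves realizing $\mu_{\max}(E)$ I would choose $\mathcal{F}_1$ of maximal rank. The modular relation shows that the sum of any two such subsheaves again realizes $\mu_{\max}(E)$ and therefore, by maximality of rank, must equal $\mathcal{F}_1$; this gives uniqueness of $\mathcal{F}_1$ and shows it contains every subsheaf of slope $\mu_{\max}(E)$. The same relation shows $\mathcal{F}_1$ has no subsheaf of strictly larger slope, i.e. it is $\omega$-semistable, so it is precisely the maximal destabilizing subsheaf required for both the inductive construction and the uniqueness statement. Iterating on the successive torsion-free quotients, and reading off the strict decrease of slopes from the fact that each graded piece is the maximal destabilizing subsheaf of the corresponding quotient, completes the proof.
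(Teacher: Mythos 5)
The paper does not actually prove this statement: it is quoted verbatim from Bruasse \cite{Bru01} in the preliminaries, so there is no internal proof to compare against. Judged on its own terms, your proposal is correct, and it is in substance a reconstruction of the argument of the cited source. The reduction to existence and uniqueness of a maximal destabilizing subsheaf, the three algebraic facts (additivity of $\deg_\omega$ on exact sequences, the modular relation for $\mathcal{F}+\mathcal{G}$ and $\mathcal{F}\cap\mathcal{G}$, harmlessness of passing to saturations), the finiteness of $\mu_{\max}(E)$ via the Chern--Weil/second-fundamental-form bound, and---the genuinely hard step, which you correctly isolate---the \emph{attainment} of $\mu_{\max}(E)$ via uniform $L^2_1$ bounds on projection operators, weak compactness, and the regularity theory for weakly holomorphic subbundles: this is exactly how Bruasse circumvents the failure of the discreteness argument that makes the projective case easy, since $\deg_\omega$ is real-valued and not topological. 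Your inductive construction and uniqueness argument (rank-maximality plus the modular relation forcing the maximal destabilizer to contain every subsheaf of maximal slope, with $\mu_{\max}(E/\mathcal{F}_1)<\mu(\mathcal{F}_1)$ following from additivity) are also the standard and correct ones. The only caveat is that you invoke the key regularity step ``in the form carried out by Bruasse,'' which is circular if the goal is to prove Bruasse's theorem from scratch; what is actually needed there is the Uhlenbeck--Yau theorem that an $L^2_1$ weakly holomorphic projection defines a coherent subsheaf, an independent earlier result, so the circularity is one of citation rather than of substance.
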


\section{Volume of a nef class}\label{volume of pseudo-effective and nef class}
\begin{thm}\label{volume of nef}Let $(X,\omega)$ be a compact Hermitian manifold endowed with a Gauduchon metric $\omega$ satisfying the assumption (*). If $\alpha\in H^{1,1}_{\partial\overline{\partial}}(X,\mathbb{R})$ is a  nef class, then one has $\mbox{vol}(\alpha)=\alpha^n$.
\end{thm}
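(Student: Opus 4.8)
The plan is to establish the two inequalities $\mathrm{vol}(\alpha)\le\alpha^n$ and $\mathrm{vol}(\alpha)\ge\alpha^n$ separately. I would first record that, under assumption (*), for any closed smooth $\theta\in\alpha$ the number $\alpha^n:=\int_X\theta^n$ is well defined: Stokes gives $\int_X\gamma^n=\int_X(\gamma+i\partial\overline\partial u)^n$ for closed $\gamma$ since every intermediate cofactor $\gamma^{a}\wedge(i\partial\overline\partial u)^{b}$ is $\partial\overline\partial$-closed, and more generally the mixed integrals $\int_X\theta^k\wedge\omega^{n-k}$ depend only on $\alpha$ and $\omega$. Choosing closed $\theta_\varepsilon\in\alpha$ with $\theta_\varepsilon\ge-\varepsilon\omega$ (nefness) and passing to a weak limit shows $\alpha$ is pseudo-effective, so both sides are meaningful and $\mathrm{vol}(\alpha)\ge0$.

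For the lower bound I would use the Monge--Amp\`ere machinery, which is exactly why Theorem \ref{Monge-Ampere equation} is recorded. For each $\varepsilon$ the form $\hat\omega_\varepsilon:=\theta_\varepsilon+2\varepsilon\omega$ is a genuine Hermitian metric ($\ge\varepsilon\omega>0$) and, by the computation behind Lemma \ref{assumption lemma}, again satisfies (*); solving the equation of Theorem \ref{Monge-Ampere equation} with background metric $\hat\omega_\varepsilon$ and prescribed volume form $\omega^n$ produces a smooth positive form $S_\varepsilon:=\hat\omega_\varepsilon+i\partial\overline\partial\psi_\varepsilon$ with $S_\varepsilon^n=C_\varepsilon\,\omega^n$, where $C_\varepsilon=\int_X\hat\omega_\varepsilon^n/\int_X\omega^n$ precisely because $\hat\omega_\varepsilon$ satisfies (*). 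Expanding $\int_X\hat\omega_\varepsilon^n=\int_X(\theta_\varepsilon+2\varepsilon\omega)^n$ and using (*) to evaluate each $\int_X\theta_\varepsilon^k\wedge\omega^{n-k}$ cohomologically gives $C_\varepsilon\to\alpha^n/\int_X\omega^n=:c_0$. The key point is that $R_\varepsilon:=S_\varepsilon-2\varepsilon\omega=\theta_\varepsilon+i\partial\overline\partial\psi_\varepsilon$ is a \emph{closed} current lying in $\alpha$ with $R_\varepsilon\ge-2\varepsilon\omega$; its mass is controlled by $\alpha\cdot\{\omega\}^{n-1}$, so after extracting a weakly convergent subsequence $S_{\varepsilon_m}\to T$ we also have $R_{\varepsilon_m}\to T$ (their difference $-2\varepsilon_m\omega\to0$), whence $T$ is a closed positive current in $\alpha$, i.e. a legitimate competitor. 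Applying Lemma \ref{semi-continuity} to the smooth positive forms $S_{\varepsilon_m}$ yields $T_{ac}(x)^n\ge\limsup_m S_{\varepsilon_m}(x)^n=c_0\,\omega^n(x)$ a.e., and integrating gives $\int_X T_{ac}^n\ge c_0\int_X\omega^n=\alpha^n$.

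For the upper bound, take any closed positive $T\in\alpha$ and regularize it by Theorem \ref{regularization}(ii) to currents $T_j\in\alpha$ with analytic singularities, $T_j\ge-\varepsilon_j\omega$, $\varepsilon_j\downarrow0$, and $T_{j,ac}\to T_{ac}$ a.e. Since $(T_{j,ac}+\varepsilon_j\omega)^n\ge0$ converges a.e. to $T_{ac}^n$, Fatou gives $\int_X T_{ac}^n\le\liminf_j\int_X(T_{j,ac}+\varepsilon_j\omega)^n$. Resolving the singularities of $T_j$ (Section \ref{resolution of singularities}) via $\mu_j:\widetilde X_j\to X$ with $\mu_j^*T_j=\theta_j+[E_j]$, $E_j\ge0$ effective with simple normal crossings and $\theta_j\ge-\varepsilon_j\mu_j^*\omega$, Lemma \ref{push-forward} together with the fact that $\mu_j$ is an isomorphism off a measure-zero set identifies $\int_X(T_{j,ac}+\varepsilon_j\omega)^n$ with $\int_{\widetilde X_j}(\theta_j+\varepsilon_j\mu_j^*\omega)^n$; by the Stokes remark above this equals the intersection number $\beta_j^n$ with $\beta_j:=\mu_j^*(\alpha+\varepsilon_j\omega)-E_j$. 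Writing $\gamma_j:=\mu_j^*(\alpha+\varepsilon_j\omega)$ (nef, being the pullback of a nef class) and telescoping, one has $\gamma_j^n-\beta_j^n=E_j\cdot\sum_{k=0}^{n-1}\gamma_j^k\cdot\beta_j^{n-1-k}$, and since $\gamma_j^n=(\alpha+\varepsilon_j\omega)^n\to\alpha^n$ it suffices to show every term on the right is $\ge0$ in order to conclude $\int_X T_{ac}^n\le\alpha^n$, and hence $\mathrm{vol}(\alpha)\le\alpha^n$ after taking the supremum over $T$.

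The hard part is precisely this last positivity: on the possibly non-K\"ahler resolution $\widetilde X_j$ I cannot invoke the usual K\"ahler positivity of intersections of nef classes, so I would restrict each term to a smooth component $D_l$ of $E_j$, represent $\gamma_j$ by smooth forms $\ge-\delta\mu_j^*\omega$ and $\beta_j$ by the nonnegative form $\theta_j+\varepsilon_j\mu_j^*\omega$, and expand $\int_{D_l}\gamma_j^k\wedge\beta_j^{n-1-k}$: the leading term is an integral over $D_l$ of a wedge of semipositive forms, hence $\ge0$, while the remaining terms are $O(\delta)$ with coefficients that are bounded intersection numbers (again well defined via Stokes on the compact $D_l$), so letting $\delta\to0$ forces nonnegativity. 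I expect the care needed to keep the resolutions compatible with the a.e. convergence $T_{j,ac}\to T_{ac}$ and to control these error terms uniformly to be the most delicate point; everything else reduces to the finiteness estimates of Lemma \ref{Boucksom 2} and standard weak-compactness of positive currents of bounded mass.
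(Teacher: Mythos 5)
Your lower bound argument is essentially the paper's: solve the Monge--Amp\`ere equation of Theorem \ref{Monge-Ampere equation} with background form $\theta_\varepsilon+O(\varepsilon)\omega$ (which indeed still has $\partial\overline{\partial}$-closed powers, since $\theta_\varepsilon$ is closed and $\omega$ satisfies (*)), note that the constant tends to $\alpha^n/\int_X\omega^n$, extract a weak limit and apply Lemma \ref{semi-continuity}. That half is correct.

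The upper bound, however, has a genuine gap. You reduce $\int_X T_{ac}^n\le\alpha^n$ to the monotonicity $\beta_j^n\le\gamma_j^n$ on the resolution $\widetilde X_j$, where $\gamma_j=\beta_j+E_j$ with $E_j$ effective, i.e.\ to the nonnegativity of the terms $E_j\cdot\gamma_j^k\cdot\beta_j^{n-1-k}$. This is exactly the kind of positivity of intersection products of nef classes that is \emph{not} available on a general (non-K\"ahler) compact complex manifold, and $\widetilde X_j$ need not be K\"ahler even if one could control $X$. Your proposed fix --- restrict to components $D_l$ of $E_j$, represent $\gamma_j$ by forms $\ge-\delta\mu_j^*\omega$ and $\beta_j$ by $\theta_j+\varepsilon_j\mu_j^*\omega$, and let $\delta\to0$ --- does not close the gap: (1) $\theta_j+\varepsilon_j\mu_j^*\omega$ is not closed, so the telescoping identity is not an identity between well-defined intersection numbers, and the quantities $\int_{D_l}\gamma_j^k\wedge\beta_j^{n-1-k}$ are not obviously independent of the chosen representatives on $D_l$; (2) the error terms are $O(\delta)$ or $O(\varepsilon_j)$ times mixed integrals over the components $D_l$ of $E_j$, and these components change with $j$, so you would need these integrals to be bounded uniformly in $j$, which is not provided by Lemma \ref{Boucksom 2} (that lemma bounds integrals over $X$, not over the exceptional divisors of the resolutions). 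The paper avoids all of this by a completely different device, due to Boucksom: writing $T=\theta+dd^c\varphi$, one takes the regularized maximum $\varphi_k^{(3)}=\max_\eta(\varphi_k^{(2)}-C_k,\varphi_{j_k}^{(1)})$ of the smooth potentials $\varphi_k^{(2)}$ coming from nefness and the Demailly approximants $\varphi_k^{(1)}$ of $\varphi$. This produces \emph{smooth closed} forms $T_k^{(3)}\in\alpha$ with $T_k^{(3)}\ge-\delta_k\omega$ and $T_k^{(3)}\to T_{ac}$ a.e., so that $\int_X(T_k^{(3)}+\delta_k\omega)^n$ is computed directly from $\alpha$, $\delta_k$ and $\omega$ using assumption (*), and Fatou gives $\int_X T_{ac}^n\le\alpha^n$ with no resolution of singularities and no intersection theory on blow-ups. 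You should replace your upper-bound argument by this construction.
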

\begin{proof}The proof is a small modification of that in \cite{Bou02}, for the sake of completeness we give the proof here. Firstly, we prove that  for every positive $T\in \alpha$, we have $\int_XT^n_{ac}\leq \alpha^n$, which will certainly imply $\mbox{vol}(\alpha)\leq \alpha^n$.  Write $T=\theta+dd^c\varphi$ with $\theta$ a smooth form. We consider a sequence $T^{(1)}_k=\theta+dd^c\varphi^{(1)}_k$ of smooth forms given by (i) of Theorem \ref{regularization}. Since $\alpha$ is nef, by definition, there exists a sequence of smooth functions $\varphi^{(2)}_k$ and a sequence of positive numbers $\varepsilon_k\rightarrow 0$, such that $T^{(2)}_k=\theta+dd^c\varphi^{(2)}_k$ satisfies $T^{(2)}_k\geq -\varepsilon_k\omega$. Set $\varphi^{(3)}_k:=\max_\eta(\varphi^{(2)}_k-C_k,\varphi^{(1)}_{j_k})$, where $C_k\rightarrow +\infty$ as $k\rightarrow +\infty$ and $\varphi_{j_k}^{(1)}$ is a properly chosen subsequence of $\varphi^{(1)}_k$ (c.f. \cite[Page 1050]{Bou02}), then $\varphi^{(3)}_k$ is a smooth function, and it is proved in \cite[Lemma 4.2]{Bou02} that, $T^{(3)}_k:=\theta+dd^c\varphi^{(3)}_k$ is a smooth form such that $T^{(3)}_k(x)\rightarrow T_{ac}(x)$ a.e., and $T^{(3)}_k\geq -\delta_k\omega$ for some sequence $\delta_k>0$ converging to $0$. Since $T^{(3)}_k+\delta_k\omega$ also converges to $T_{ac}$ a.e., Fatou's lemma gives us
\begin{align}
\int_XT^n_{ac}\leq \liminf\limits_{k\rightarrow \infty}\int_X(T^{(3)}_k+\delta_k\omega)^n,\notag
\end{align}
and the latter integral depends only on the class $\alpha$ and $\delta_k$, thus it converges to $\alpha^n$. That is $\int_XT^n_{ac}\leq \alpha^n$.

Secondly, we want to show $\mbox{vol}(\alpha)\geq \int\alpha^n$. Normalize our Gauduchon metric $\omega$ in the assumption (*), such that $\int_X\omega^n=1$. For $\varepsilon>0$, there exists a closed form $T\in \alpha$ such that $T+\varepsilon\omega>0$. Using Theorem \ref{Monge-Ampere equation}, one can solve the  equation
\begin{align}
\tau^n_\varepsilon=\big(\int_X(T+\varepsilon\omega)^n\big)\omega^n,\notag
\end{align}
where $\tau_\varepsilon=T+\varepsilon\omega+dd^c\varphi_\varepsilon>0$, and $\phi_\varepsilon$ is normalized so that   $\sup_X\varphi_\varepsilon=0$.  Since the family $\tau_\varepsilon-\varepsilon\omega\in\alpha$ represents a bounded set of cohomology classes, it is bounded in mass and we can thus extract some weak limit $T=\lim\limits_{\varepsilon\rightarrow 0}(\tau_\varepsilon-\varepsilon\omega)=\lim\limits_{\varepsilon\rightarrow 0}\tau_\varepsilon$, where the second equality holds  because $\lim\limits_{\varepsilon\rightarrow 0}\varepsilon\omega=0$ in the strong sense. By Lemma \ref{semi-continuity}, we get $T^n_{ac}\geq (\int\alpha^n)\omega^n$, by integrating, we get $\mbox{vol}(\alpha)\geq \alpha^n$.
\end{proof}



\section{The Grauert-Riemenschneider criterion: Proof of Theorem \ref{Grauert-Riemenschneider} }\label{Grauert-Riemenschneider criterion}

\begin{proof}[Proof of Theorem \ref{Grauert-Riemenschneider}]
From the definition of $\mbox{vol}(\alpha)$, one can find a positive closed current $S\in \alpha$ such that $\int_XS^n_{ac}>\frac{\mbox{vol}(\alpha)}{2}>0$.  Apply (ii) in Theorem \ref{regularization}, combined with Fatou's lemma, we can construct a sequence $T_k$ of closed currents with analytic singularities in $\alpha$ such that $T_k\geq -\varepsilon_k\omega$ and $\int_XT_{k,ac}^n\geq c$ for some uniform lower bound $c>0$. Where $\varepsilon_k\rightarrow 0$ as $k\rightarrow \infty$. In fact, one has $0<\int_XS^n_{ac}\leq \liminf_k\int_X(T_{k,ac}+\varepsilon_k\omega)^n$, thus one can subtract a subsequence which is  denoted by $T_{k,ac}+\varepsilon_k\omega$ such that $\int_X(T_{k,ac}+\varepsilon_k\omega)^n>C$ for some uniform constant $C$. But
\begin{align*}
\int_X(T_{k,ac}+\varepsilon_k\omega)^n=\int_XT^n_{k,ac}+\sum_{l=1}^n\varepsilon^l_k\binom{n}{l}\int_XT_{k,ac}^{n-l}\wedge\omega^l
\end{align*}
  where the second term is uniformly bounded for $k$ large enough (say $0<\varepsilon_k<<1$) by Lemma \ref{Boucksom 2}.
  For each $k$, we  choose a  smooth proper modification $\mu_k:X_k\rightarrow X$ such that $\mu_k^*T_k=\theta_k+D_k$, with $\theta_k$ ($\geq -\varepsilon_k\mu_k^*\omega$) a smooth closed form and $E_k$ an real effective divisor.
Set $\Omega_k=\varepsilon_k\mu^*_k\omega$. It is easy to see that $0<c\leq \int_XT^n_{k,ac}=\int_{X_k}(\mu^*_kT_k)^n_{ac}=\int_{X_k}\theta_k^n$. Select on each $X_k$ a Gauduchon metric  $\widetilde{\omega}_k$ which also satisfies the assumption (*) by the following

\begin{lem}[c.f. \cite{Dem12}]\label{blow-up}Suppose that $(X,\omega)$ is a compact complex manifold satisfying the assumption (*). Let $\mu:\widetilde{X}\rightarrow X$ is a smooth modification (a tower of blow-ups).  Then  there exists a Gauduchon metric $\Omega$ satisfying the assumption (*) on $\widetilde{X}$.
\end{lem}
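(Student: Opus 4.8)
The plan is to reduce to a single blow-up along a smooth center and then induct along the tower, correcting the (degenerate) pull-back form at each stage by a \emph{closed} $(1,1)$-form. The reason for insisting on closedness is the following elementary but decisive observation: if $\gamma$ is a smooth $d$-closed real $(1,1)$-form and $\eta$ is any smooth form, then $\partial\overline{\partial}(\eta\wedge\gamma^j)=(\partial\overline{\partial}\eta)\wedge\gamma^j$, because $\gamma^j$ is of pure type $(j,j)$ and $d$-closed, hence both $\partial$-closed and $\overline{\partial}$-closed. Consequently, writing $\Omega=\mu^*\omega+\varepsilon\gamma$ and expanding $\Omega^k=\sum_{j=0}^k\binom{k}{j}\varepsilon^j(\mu^*\omega)^{k-j}\wedge\gamma^j$, every summand satisfies $\partial\overline{\partial}[(\mu^*\omega)^{k-j}\wedge\gamma^j]=\mu^*(\partial\overline{\partial}\omega^{k-j})\wedge\gamma^j=0$ for $1\le k\le n-1$, since $0\le k-j\le n-1$ and $\omega$ satisfies the assumption (*). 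Thus $\Omega$ automatically satisfies the assumption (*) (in particular, taking $k=n-1$, it is Gauduchon) for \emph{any} choice of closed $\gamma$, and the whole problem collapses to producing a single closed correction with the right positivity.

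Granting this, the construction proceeds as follows. First I reduce to one blow-up $\mu\colon\widetilde X\to X$ with smooth center $Z$ and exceptional divisor $E$; a general tower is then handled by induction, applying the single-blow-up statement successively to the metrics produced at each stage (these already satisfy the assumption (*), which is all the inductive step requires, the complex dimension being unchanged throughout). For a single blow-up I take $\gamma=u$ to be the Chern curvature form of a Hermitian metric on the line bundle $\mathcal O_{\widetilde X}(-E)$; being a curvature form, $u$ is a closed real $(1,1)$-form. Because $\mathcal O(-E)|_E=\mathcal O_{\mathbb P(N_{Z/X})}(1)$ is relatively ample over $Z$, the metric can be chosen so that $u$ is positive on the directions tangent to the fibres of $\mu|_E\colon E\to Z$ throughout a neighborhood of $E$.

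It then remains to verify positivity of $\Omega=\mu^*\omega+\varepsilon u$ for $\varepsilon>0$ small. Away from $E$ the map $\mu$ is a biholomorphism, so $\mu^*\omega$ is positive definite and bounded below on the complement of any fixed neighborhood of $E$, and adding $\varepsilon u$ preserves positivity there for small $\varepsilon$. Near $E$ one uses that $\mu^*\omega\ge 0$ with kernel exactly the vertical (fibre) directions, while $u$ is positive on precisely those directions; the standard linear-algebra and compactness argument (the same one showing that blow-ups of K\"ahler manifolds are K\"ahler) then gives $\mu^*\omega+\varepsilon u>0$ near $E$ for small $\varepsilon$. Combining the two regions yields $\Omega>0$ on all of $\widetilde X$ for $\varepsilon$ small enough, and by the computation of the first paragraph $\Omega$ satisfies the assumption (*). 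This settles the single-blow-up case, and the induction finishes the proof.

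The main obstacle is exactly the tension between positivity and the assumption (*): an arbitrary positive correction of $\mu^*\omega$ produces cross terms $(\mu^*\omega)^{k-j}\wedge\gamma^j$ whose $\partial\overline{\partial}$ need not vanish, which would destroy the assumption (*). The device resolving it is to perform the correction with a $d$-closed form, namely the Chern curvature of $\mathcal O(-E)$, so that all cross terms are killed by $\partial\overline{\partial}$ for purely formal reasons; the only genuine geometric input that survives is the classical fibrewise positivity of $\mathcal O(-E)$ near the exceptional divisor.
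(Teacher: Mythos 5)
Your proof is correct and follows essentially the same route as the paper: correct $\mu^*\omega$ by a small multiple of the ($d$-closed) Chern curvature of a metric on $\mathcal{O}(-E)$ that is positive along the fibres of $E\to Z$, obtain positivity by the standard compactness argument, iterate up the tower, and use closedness of the correction to preserve the assumption (*). If anything, your explicit expansion of $\Omega^k$ showing that any closed $(1,1)$-correction preserves (*) is slightly more careful than the paper's appeal to its Lemma on $i\partial\overline{\partial}\phi$-perturbations, but the argument is the same in substance.
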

\begin{proof}
Suppose that $\widetilde{X}$ is obtained as a tower of blow-ups
\begin{align}
\widetilde{X}=X_N\rightarrow X_{N-1}\rightarrow \cdots\rightarrow X_1\rightarrow X_0=X,
\end{align}
where $X_{j+1}$ is the blow-up of $X_j$ along a smooth center $Y_j\subset X_j$. Denote by $E_{j+1}\subset X_{j+1}$ the exceptional divisor, and let $\mu_j:X_{j+1}\rightarrow X_j$ be the blow-up map. The line bundle $\mathcal{O}(-E_{j+1})|_{E_{j+1}}$ is equal to $\mathcal{O}_{P(N_j)}(1)$ where $N_j$ is the normal bundle to $Y_j$ in $X_j$. Pick an arbitrary smooth Hermitian metric on $N_j$, use this metric to get an induced Fubini-Study metric on $\mathcal{O}_{P(N_j)}(1)$, and finally extend this metric as a smooth Hermitian metric on the line bundle $\mathcal{O}(-E_{j+1})$. Such a metric has positive curvature along tangent vectors of $X_{j+1}$ which are tangent to the fibers of $E_{j+1}=P(N_j)\rightarrow Y_j$. Assume further that $\omega_j$ is a Gauduchon metric satisfying assumption (*) on $X_j$. Then
\begin{align}
\Omega_{j+1}=\mu^*_j\omega_j-\varepsilon_{j+1}u_{j+1}
\end{align}
where $\mu^*_j\omega_j$ is  semi-positive on $X_{j+1}$, positive definite on $X_{j+1}\setminus E_{j+1}$, and also positive definite on tangent vectors of $T_{X_{j+1}}|_{E_{j+1}}$ which are not tangent to the fibers of $E_{j+1}\rightarrow Y_j$. It  is then easily to see that $\Omega_{j+1}>0$   by taking $\varepsilon_{j+1}\ll 1$. Thus our final candidate $\Omega$ on $\widetilde{X}$ has the form $\Omega=\mu^*\omega-\sum\varepsilon_j\widetilde{u}_j$, where $\widetilde{u}_j=(\mu_{N-1}\circ \cdots\circ \mu_{j})^*u_j$. Since every $u_j$ is a curvature term of a line bundle, the term $\sum\varepsilon_j\widetilde{u}_j$ is $d$-closed. Now from Lemma \ref{assumption lemma}  our $\Omega$ satisfies the assumption (*).
\end{proof}
Now we want to show that the class $\{\theta_k\}$ is big for $k$ large.  It suffices to show that there exists $\varepsilon_0>0$ and a distribution $\chi$ such that $\theta_k+dd^c\chi\geq \varepsilon_0 \widetilde{\omega}_k$. According to Lamari's criterion \cite{Lam99}, c.f. Section \ref{lamari}, this is equivalent to showing that
\begin{align}
\int_X\theta_k\wedge g^{n-1}\geq \varepsilon_0\int_X\widetilde{\omega}_k\wedge g^{n-1}\notag
\end{align}
for any Gauduchon metric $g$ on $X_k$. Here we use a theorem of Michelsohn \cite{Mic83} which states that every strictly positive $(n-1,n-1)$-form $\beta$ has a $(1,1)$ root  $g$ such that $\beta=g^{n-1}$. Suppose to the contrary that for any $m\in \mathbb{N}$, there exists $\omega_m$ a Gauduchon metric on $X_k$ such that
\begin{align}
\int_{X_k}\theta_k\wedge\omega_m^{n-1}\leq \frac{1}{m}\int_{X_k}\widetilde{\omega}_k\wedge\omega_m^{n-1}.\notag
\end{align}
We can assume that
\begin{align}
\int_{X_k}\widetilde{\omega}_k\wedge\omega^{n-1}_m=1\notag
\end{align}
and therefore
\begin{align}
\int_{X_k}\theta_k\wedge \omega^{n-1}_m\leq \frac{1}{m}.\notag
\end{align}

From Theorem \ref{Monge-Ampere equation}, we can solve the equation
\begin{align}\label{solution}
(\theta_k+\Omega_k+\frac{1}{m}\widetilde{\omega}_k+dd^c\varphi_m)^n=C_m\omega_m^{n-1}\wedge \widetilde{\omega}_k
\end{align}
for a function $\varphi_m\in\mathcal{C}^\infty(X_k,\mathbb{R}) $ such that if we set $$\alpha_m=\theta_k+\Omega_k+\frac{1}{m}\widetilde{\omega}_k+dd^c\varphi_m,$$ then $\alpha_m>0$. The constant $C_m$ is given by
\begin{align}\label{estimate of Cm}
C_m&=\int_{X_k}\big(\theta_k+\Omega_k+\frac{1}{m}\widetilde{\omega}_k\big)^n\geq \int_{X_k}(\theta+\Omega_k)^n\\
&=\int_{X_k}\theta^n_k+O(\varepsilon_k)\int_X\sum_{p\geq1}T^p_{k,ac}\wedge\omega^{n-p}\notag\\
&\geq C>0.\notag
\end{align}
The third inequality follows from Lemma \ref{Boucksom 2} and for $k$ sufficiently large. Where $C$ is a uniform constant depends only on the cohomology class $\alpha$ and $\omega$. Now
\begin{align}
\int_{X_k}\alpha_m^{n-1}\wedge\widetilde{\omega}_k=\int_{X_k}\widetilde{\omega}_k\wedge(\theta_k+\frac{1}{m}\widetilde{\omega}_k+\Omega_k)^{n-1}\leq \int_{X_k}\widetilde{\omega}_k\wedge(\theta_k+\widetilde{\omega}_k)^{n-1}\notag
\end{align}
But one have
\begin{align}
\int_{X_k}\widetilde{\omega}_k\wedge(\theta_k+\widetilde{\omega}_k)^{n-1}\leq C'\int_X\omega\wedge(T_{k,ac}+\omega)^{n-1}\leq M.\notag
\end{align}
The second inequality holds since $\widetilde{\omega}_k=\mu_k^*\omega+\sum \varepsilon_j\widetilde{u}_j$ and $\varepsilon_j$ can be chosen sufficiently small which can be easily seen from  Lemma \ref{blow-up} and similar argument as in (\ref{estimate of Cm}), $M$ is also a uniform constant which depends only on the cohomology class $\alpha$ and the metric $\omega$ on $X$, and the  last inequality is  due to  Lemma \ref{Boucksom 2}.

Set
\begin{align}
E=\Big\{\frac{\alpha_m^{n-1}\wedge \widetilde{\omega}_k}{\omega_m^{n-1}\wedge\widetilde{\omega}_k}>2M\Big\}\notag
\end{align}
then
\begin{align}\label{chio1}
\int_E\omega_m^{n-1}\wedge \widetilde{\omega}_k\leq \frac{1}{2}.
\end{align}
Therefore on $X_k\setminus E$, we have $\alpha_m^{n-1}\wedge \widetilde{\omega}_k\leq 2M\omega_m^{n-1}\wedge \widetilde{\omega}_k$. By looking at the eigenvalues of $\alpha_m$ with respect to $\omega$, from (\ref{solution}), it follows that on $X_k\setminus E$, we have
\begin{align}
\alpha_m\geq \frac{C_m}{2nM}\widetilde{\omega}_k.\notag
\end{align}
Therefore
\begin{align}\label{chio2}
\int_{X_k}\alpha_m\wedge\omega^{n-1}_m\geq \int_{X_k\setminus E}\alpha_m\wedge\omega^{n-1}_m\geq \frac{C_m}{2nM}\int_{X_k\setminus E}\widetilde{\omega}_k\wedge \omega^{n-1}_m=\\
=\frac{C_m}{2nM}\big(\int_{X_k}\widetilde{\omega}_k\wedge\omega_m^{n-1}-\int_E\widetilde{\omega}_k\wedge\omega^{n-1}_m\big)\geq \frac{C}{4nM}.\notag
\end{align}
On the other hand,
\begin{align}\label{chio3}
\int_{X_k}\alpha_m\wedge\omega^{n-1}_m\leq\int_{X_k}\theta_k\wedge\omega^{n-1}_m+\frac{2}{m}\int_{X_k}\widetilde{\omega}_k\wedge\omega_m^{n-1}\leq \frac{3}{m},
\end{align}
which is  a contradiction for $m>>0$. Here the first inequality in (\ref{chio3}) holds for $k$ sufficiently large such that $\Omega_k=\varepsilon_k\mu^*_k\omega\leq \frac{1}{m}\widetilde{\omega}_k$. Therefore $\theta_k$ is big. i.e. there exists a K\"{a}hler current $\Theta\in \{\theta_k\}$ on $X_k$, hence a K\"{a}hler current $(\mu_{k})_*(\Theta+D_k)$ (see Lemma \ref{push-forward}) on $X$. Thus it follows that $X$ is in the \textit{Fujiki} class. Theorem 2.2 in \cite{Chi} implies that a manifold in the \textit{Fujiki} class and which is strong K\"{a}hler with torsion (i.e. it supports a $\partial\overline{\partial}$-closed Hermitian metric), is in fact K\"{a}hler.

\end{proof}

\begin{rem}
It is a fact that in general the pseudo-effective cone (even the big cone) and the nef cone on a compact complex manifold are not the same. In general, the nef cone is contained in the pseudo-effective cone. But the converse is not true. For example, the  exceptional divisor of a blowing-up along one point in $\mathbb{C}P^2$ is pseudo-effective but not nef.  To characterize the pseudo-effective cone is an important question in the study of   complex geometry.
\end{rem}

\begin{rem}Recently, Tosatti \cite{Tos15} gave a proof of (b) using ideas very close to our proof of Theorem \ref{Grauert-Riemenschneider}.
\end{rem}

\begin{rem} In Demailly's book \cite{Dem12}, Demailly introduced the following  definition of vol$_D(\alpha)$ for a pseudo-effective class $\alpha$ on K\"{a}hler manifolds.
 \begin{defin}[c.f. \cite{Dem12}]
Let $X$ be a compact K\"{a}hler manifold. The volume, or mobile self-intersection of a class $\alpha\in H^{1,1}(X,\mathbb{R})$ is defined  to be
\begin{align*}
\mbox{vol}_D(\alpha)=\sup_{T\in \alpha}\int_{X\setminus \mbox{sing}(T)}T^n=\sup_{T\in \alpha}\int_{\widetilde{X}}\beta^n>0
\end{align*}
where the supremum is taken over all K\"{a}hler currents $T\in \alpha$ with logarithmic poles, and $\mu^*T=[E]+\beta$ with respect to some modification $\mu:\widetilde{X}\rightarrow X$. Correspondingly, we set $\mbox{vol}(\alpha)=0$ if $\alpha\not\in \mathcal{E}^o$.
\end{defin}

 It is almost trivial that vol$_D(\alpha)\leq \mbox{vol}(\alpha)$. From Theorem \ref{Grauert-Riemenschneider}, one can now see that if the compact Hermitian manifold $(X,\omega)$ satisfies the assumption (*) and  vol$(\alpha)>0$, then $\alpha$ is big and $X$ is K\"{a}hler. Thus it is natural to ask whether vol$_D(\alpha)=\mbox{vol}(\alpha)$ on a K\"{a}hler manifold.

Firstly, it is easy to see that $\mbox{vol}_D(\alpha)=\sup_{T\in \alpha}\int_{X}T_{ac}^n$, where the supremum is taken over all K\"{a}hler currents $T\in \alpha$ with logarithmic poles. In particular, one has $\mbox{vol}_D(\alpha)\leq \mbox{vol}(\alpha)$.

Secondly, one have $\mbox{vol}(\alpha)\leq \mbox{vol}_D(\alpha)$.
In fact, it is trivial that $\mbox{vol}(\alpha)=0\Leftrightarrow\mbox{vol}_D(\alpha)=0$. Otherwise it is a direct consequence of the following version of Fujita's theorem due to Boucksom.
\begin{thm}[c.f. \cite{Bou02}]Let $X$ be a compact K\"{a}hler manifold, and let $\alpha\in H^{1,1}(X,\mathbb{R})$ is a big class on $X$. Then for every $\varepsilon>0$, there exists a modification $\mu:\widetilde{X}\rightarrow X$, a K\"{a}hler class $\omega$ and an effective real divisor $D$ on $\widetilde{X}$ such that
\begin{itemize}
\item  $\mu^*\alpha=\omega+\{D\}$ as cohomology classes,
\item  $|\mbox{vol}(\alpha)-\mbox{vol}(\omega)|<\varepsilon$.
\end{itemize}
\end{thm}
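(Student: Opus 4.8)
The plan is to realize near-optimal positive currents in $\alpha$ by currents with \emph{analytic} singularities, to resolve those singularities on a modification, and then to correct the resulting nef class into a genuine K\"{a}hler class at the cost of enlarging the effective divisor by an arbitrarily small multiple of the exceptional divisors. Concretely, I would first fix $\varepsilon>0$ and, using only that $\alpha$ is big, produce a K\"{a}hler current $T'\in\alpha$ with analytic singularities satisfying $T'\geq\delta\omega_0$ for some K\"{a}hler form $\omega_0$ and some $\delta>0$, and with $\int_X (T')^n_{ac}>\mathrm{vol}(\alpha)-\varepsilon/2$. I would then blow up along the subscheme cut out by the singularities of $T'$ (Section \ref{resolution of singularities}) to get $\mu\colon\widetilde{X}\to X$ with $\mu^*T'=\beta+[D]$, where $D$ is an effective simple normal crossing $\mathbb{R}$-divisor and $\beta$ is smooth. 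Since $T'\geq\delta\omega_0$, we get $\beta\geq\delta\mu^*\omega_0\geq0$, so $\beta=(\mu^*T')_{ac}$ is a smooth semi-positive form, $\{\beta\}$ is nef, and by Lemma \ref{push-forward},
\[
\int_{\widetilde{X}}\beta^n=\int_{\widetilde{X}}(\mu^*T')^n_{ac}=\int_X (T')^n_{ac}>\mathrm{vol}(\alpha)-\tfrac{\varepsilon}{2}.
\]

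Producing the current $T'$ is the step I expect to be the main obstacle. Here I would combine the definition of $\mathrm{vol}(\alpha)$ as a supremum of absolutely continuous masses with Demailly's regularization by currents with analytic singularities (Theorem \ref{regularization}(ii)): choose any positive $T\in\alpha$ with $\int_X T^n_{ac}>\mathrm{vol}(\alpha)-\varepsilon/4$, and approximate it by currents $T_k$ with analytic singularities satisfying $T_k\geq\gamma-\varepsilon_k\omega$ and $T_{k,ac}(x)\to T_{ac}(x)$ almost everywhere. Because $\alpha$ is big one may take $\gamma=\delta\omega_0$, so that $T_k\geq(\delta-\varepsilon_k)\omega_0\geq\tfrac{\delta}{2}\omega_0$ remains a K\"{a}hler current for $k$ large. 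A Fatou argument applied to $(T_{k,ac}+C\omega)^n$, with the lower-order mixed terms $\int_X T_{k,ac}^{\,n-l}\wedge\omega^l$ controlled uniformly by Lemma \ref{Boucksom 2} (exactly as in the proof of Theorem \ref{Grauert-Riemenschneider}), then forces $\int_X T^n_{k,ac}>\mathrm{vol}(\alpha)-\varepsilon/2$ for some large $k$, and I set $T'=T_k$.

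It remains to upgrade the nef class $\{\beta\}$ to a K\"{a}hler class. Write $\widetilde{X}$ as a tower of blow-ups with exceptional divisors $E_j$ and choose smooth closed $(1,1)$-forms $u_j\in\{E_j\}$ as in the proof of Lemma \ref{blow-up}, so that $\mu^*\omega_0-\sum_j\eta_j u_j$ is K\"{a}hler for $\eta_j>0$ small; set $\eta=\min_j\eta_j$. For $0<c\leq\eta$ the form $\mu^*\omega_0-c\sum_j u_j=(1-c/\eta)\mu^*\omega_0+(c/\eta)\bigl(\mu^*\omega_0-\eta\sum_j u_j\bigr)$ is a convex combination of a semi-positive and a K\"{a}hler form, hence strictly positive. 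Taking $s>0$ with $2s/\delta\leq\eta$ and writing
\[
\beta-s\sum_j u_j=\Bigl(\beta-\tfrac{\delta}{2}\mu^*\omega_0\Bigr)+\Bigl(\tfrac{\delta}{2}\mu^*\omega_0-s\sum_j u_j\Bigr)>0,
\]
I obtain a K\"{a}hler class $\omega:=\{\beta\}-s\sum_j\{E_j\}$ with
\[
\mu^*\alpha=\{\beta\}+\{D\}=\omega+\Bigl\{D+s\textstyle\sum_j E_j\Bigr\},
\]
and $D+s\sum_j E_j$ is an effective real divisor. Finally, since $\omega$ is K\"{a}hler the nef volume formula (Theorem \ref{volume of nef}) gives $\mathrm{vol}(\omega)=\omega^n$, and by continuity of $\omega\mapsto\omega^n$ one has $|\omega^n-\int_{\widetilde X}\beta^n|<\varepsilon/2$ for $s$ small; combined with $\mathrm{vol}(\alpha)-\varepsilon/2<\int_{\widetilde X}\beta^n\leq\mathrm{vol}(\alpha)$ (the upper bound holding because $T'\in\alpha$), this yields $|\mathrm{vol}(\alpha)-\mathrm{vol}(\omega)|<\varepsilon$, completing the proof.
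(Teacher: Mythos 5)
The paper itself offers no proof of this statement---it is quoted directly from \cite{Bou02}---so your proposal can only be compared with Boucksom's argument; your overall strategy (approximate by a Kähler current with analytic singularities, resolve, then perturb the resulting semi-positive class into a Kähler class by subtracting a small multiple of the exceptional divisors) is indeed the standard route. However, there is one genuine gap, located exactly at the step you yourself flagged as the main obstacle. You choose a positive current $T\in\alpha$ with $\int_X T^n_{ac}>\mathrm{vol}(\alpha)-\varepsilon/4$ and then assert that ``because $\alpha$ is big one may take $\gamma=\delta\omega_0$'' in Theorem \ref{regularization}(ii). This is not legitimate: the form $\gamma$ in that theorem must be a lower bound for the \emph{given} current $T$, and the conclusion $T_k\geq\gamma-\varepsilon_k\omega$ holds only for such a $\gamma$. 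Bigness of $\alpha$ produces \emph{some} Kähler current $T_0\geq\delta\omega_0$ in $\alpha$, but gives no control on the particular near-optimal current $T$, which a priori satisfies only $T\geq 0$. Indeed, if $\alpha$ is big and nef with a smooth semi-positive but non-Kähler representative $\theta$ (e.g.\ the pullback of a Kähler form under a modification), then $T=\theta$ attains the volume, yet no currents $T_k\geq\tfrac{\delta}{2}\omega_0$ can converge weakly to it, since weak limits preserve lower bounds by smooth forms. With the only admissible choice $\gamma=0$, your $T_k$ satisfy merely $T_k\geq-\varepsilon_k\omega$, the forms $\beta$ after resolution are no longer bounded below by $\delta\mu^*\omega_0$, and the final step (making $\{\beta\}-s\sum_j\{E_j\}$ Kähler) collapses, because it uses precisely that fixed positive lower bound.

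The repair is a standard convexification missing from your write-up: fix a Kähler current $T_0\in\alpha$ with $T_0\geq\delta\omega_0$ and replace $T$ by $T_s:=(1-s)T+sT_0\in\alpha$. Then $T_s\geq s\delta\omega_0$, so Theorem \ref{regularization}(ii) applies with $\gamma=s\delta\omega_0$, while $(T_s)_{ac}\geq(1-s)T_{ac}$ gives $\int_X(T_s)^n_{ac}\geq(1-s)^n\int_X T^n_{ac}$, which still exceeds $\mathrm{vol}(\alpha)-\varepsilon/3$ for $s$ small; the rest of your argument then runs with $\delta$ replaced by $s\delta$. A second, much smaller slip: from the fact that $\mu^*\omega_0-\sum_j\eta_j u_j$ is Kähler you cannot conclude that $\mu^*\omega_0-\eta\sum_j u_j$ is Kähler with $\eta=\min_j\eta_j$, because the $u_j$ are not sign-definite, so lowering their coefficients can destroy positivity. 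Instead scale the whole sum: for $0<t\leq 1$,
\begin{equation*}
\mu^*\omega_0-t\sum_j\eta_j u_j=(1-t)\mu^*\omega_0+t\Bigl(\mu^*\omega_0-\sum_j\eta_j u_j\Bigr)>0,
\end{equation*}
and subtract $t\sum_j\eta_j u_j$ from $\beta$, so that the divisor added to $D$ is $t\sum_j\eta_j E_j$. With these two corrections your proof is complete and agrees in substance with Boucksom's.
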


To conclude, we prove the following
\begin{prop}
Let $X$ be a compact K\"{a}hler manifold, for any $\alpha$ in $H^{1,1}(X,\mathbb{R})$,  $\mbox{vol}(\alpha)=\mbox{vol}_D(\alpha)$.
\end{prop}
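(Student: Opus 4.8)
The plan is to prove the two inequalities $\mbox{vol}_D(\alpha)\le\mbox{vol}(\alpha)$ and $\mbox{vol}(\alpha)\le\mbox{vol}_D(\alpha)$ separately; each has been reduced in the preceding remark to a statement we may take as known, so the work lies in assembling them and in checking one pushforward computation.

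First I would dispose of $\mbox{vol}_D(\alpha)\le\mbox{vol}(\alpha)$. For a K\"{a}hler current $T$ with logarithmic poles the singular locus is an analytic set of Lebesgue measure zero and $T=T_{ac}$ off it, so $\int_{X\setminus\mbox{sing}(T)}T^n=\int_X T^n_{ac}$; hence $\mbox{vol}_D(\alpha)=\sup_T\int_X T^n_{ac}$ with the supremum taken over such currents. Since these form a subfamily of the closed positive $(1,1)$-currents in $\alpha$ appearing in the definition of $\mbox{vol}(\alpha)$, and the integrand is identical, the inequality is immediate.

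For the reverse inequality I would first note that $\mbox{vol}(\alpha)=0$ forces $\mbox{vol}_D(\alpha)=0$ as well, both vanishing exactly when $\alpha\notin\mathcal{E}^o$, so the claim is trivial there. Assume then $\mbox{vol}(\alpha)>0$; by the Grauert--Riemenschneider criterion (b) the class $\alpha$ is big. Fix $\varepsilon>0$ and apply Boucksom's version of Fujita's theorem quoted above to obtain a modification $\mu:\widetilde X\to X$, a K\"{a}hler class $\{\omega\}$ and an effective real divisor $D$ on $\widetilde X$ with $\mu^*\alpha=\{\omega\}+\{D\}$ and $|\mbox{vol}(\alpha)-\mbox{vol}(\omega)|<\varepsilon$. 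Picking a genuine K\"{a}hler form $\omega$ in its class, $\widetilde T:=\omega+[D]$ is a K\"{a}hler current with analytic singularities along $D$ representing $\mu^*\alpha$, and its direct image $T:=\mu_*\widetilde T$ is a K\"{a}hler current in $\alpha$ with logarithmic poles. Since $\widetilde T_{ac}=\omega$, Lemma \ref{push-forward} gives $T_{ac}=\mu_*(\widetilde T_{ac})=\mu_*\omega$, and as $\mu$ is a biholomorphism off a measure-zero set we get
\[
\int_X T^n_{ac}=\int_{\widetilde X}\omega^n=\{\omega\}^n=\mbox{vol}(\omega),
\]
the last equality because a K\"{a}hler (hence nef) class has volume equal to its top self-intersection (Theorem \ref{volume of nef}). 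Therefore $\mbox{vol}_D(\alpha)\ge\mbox{vol}(\omega)>\mbox{vol}(\alpha)-\varepsilon$, and letting $\varepsilon\to0$ yields $\mbox{vol}_D(\alpha)\ge\mbox{vol}(\alpha)$; combining the two inequalities finishes the proof.

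The main obstacle, such as it is, will be justifying the displayed identity $\int_X T^n_{ac}=\int_{\widetilde X}\omega^n$: one must be sure that forming the absolutely continuous part commutes with $\mu_*$, which is exactly the content of Lemma \ref{push-forward}, and that neither the exceptional locus of $\mu$ nor the polar set of $T$ contributes to the integral of the $L^1$-form $T^n_{ac}$. Everything else is bookkeeping once Boucksom's Fujita theorem and the nef volume formula are granted.
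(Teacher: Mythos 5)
Your proposal is correct and follows essentially the same route as the paper: the inequality $\mathrm{vol}_D(\alpha)\le \mathrm{vol}(\alpha)$ comes from rewriting $\mathrm{vol}_D$ as a supremum of $\int_X T_{ac}^n$ over a subfamily of positive currents, and the reverse inequality is deduced from Boucksom's version of Fujita's theorem. You merely make explicit the step the paper leaves implicit (constructing the K\"ahler current $\mu_*(\omega+[D])$ with logarithmic poles and computing its absolutely continuous Monge--Amp\`ere mass via the push-forward lemma), which is a welcome but not divergent elaboration.
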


\end{rem}

\section{A characterization of the nef anti-canonical bundle on $X$: Proof of Theorem \ref{characterization of nef}}\label{nef}

Let $(X,\omega)$ be a compact complex manifold with a Hermitian  metric $\omega$. Denote by Ricci$(\omega)$ the Chern Ricci curvature of $(X,\omega)$, i.e. the Chern curvature of $K_X^{-1}$  corresponding to the  Hermitian metric induced by the Hermitian metric $\omega$ of $X$.

Firstly, we prove that (i) implies (ii). Suppose $L:=K_X^{-1}$ is nef, that is for any $\varepsilon>0$, there is a smooth hermitian metric $h_\varepsilon$ of $L$, such that $\Theta_{L,h_\varepsilon}\geq-\varepsilon\omega$. From the fact that $\Theta_{L,h_\varepsilon}$ is the Chern Ricci curvature of $X$ and thus a representative of the first Chern class of $X$. One asks for a $\phi$, such that $\omega_\varepsilon:=\omega+i\partial\overline{\partial}\phi>0$ and Ricci$(\omega_\varepsilon)\geq -\varepsilon\omega_\varepsilon$. Let us find out what equation should such $\phi$ satisfy. Now let $u_\varepsilon:=\Theta_{L,h_\varepsilon}\geq-\varepsilon\omega$. Then $u_\varepsilon=\mbox{Ricci}(\omega)+i\partial\overline{\partial}F_\varepsilon$. It thus suffices to  find a $\phi$ such that
\begin{align}\label{equivalent equation1}
\mbox{Ricci}(\omega_\varepsilon)=-\varepsilon\omega_\varepsilon+\varepsilon\omega+u_\varepsilon,
\end{align}
which is equivalent to  equation (\ref{MA2}). In fact,
\begin{align*}
i\partial\overline{\partial}\log\omega_\varepsilon^n-i\partial\overline{\partial}\log\omega^n&=\mbox{Ricci}(\omega)-\mbox{Ricci}(\omega_\varepsilon)\\
&=\varepsilon(\omega_\varepsilon-\omega)+\mbox{Ricci}(\omega)-u_\varepsilon\\
&=i\partial\overline{\partial}(\varepsilon\phi-F_\varepsilon).
\end{align*}
From Theorem \ref{Cherrier}, one concludes that for any $\varepsilon>0$, there is a smooth function $\phi_\varepsilon$, such that $\omega_\varepsilon:=\omega+i\partial\overline{\partial}\phi>0$ and $\mbox{Ricci}(\omega_\varepsilon)\geq-\varepsilon\omega_\varepsilon$.

Conversely, since $\mbox{Ricci}(\omega_\varepsilon)\geq-\varepsilon\omega_\varepsilon$ and $\omega_\varepsilon:=\omega+i\partial\overline{\partial}\phi>0$, one can easily conclude that $\mbox{Ricci}(\omega_\varepsilon)+i\partial\overline{\partial}(\varepsilon\phi)\geq-\varepsilon\omega$. But $\mbox{Ricci}(\omega_\varepsilon)+i\partial\overline{\partial}(\varepsilon\phi)$ is precisely a curvature form associated to a  Hermitian metric on $K_X^{-1}$. Thus one gets the nefness of $K_X^{-1}$.

\section{A semi-positive property of the Harder-Narasimhan filtration of $T_X$: Proof of Theorem \ref{semi-positivity}}\label{semipositive}

The  proof given here is along the line of the proof in Demailly \cite{Dem14}.
First consider the case where the filtration is regular, i.e. all sheaves $\mathcal{F}_i$ and their quotients $\mathcal{F}_i/\mathcal{F}_{i-1}$ are vector bundles. By the stability condition, it is sufficient to prove that
\begin{align*}
\int_Xc_1(T_X/\mathcal{F}_i)\wedge\omega^{n-1}\geq 0
\end{align*}
for all $i$.
From Theorem \ref{characterization of nef}, for each $\varepsilon>0$, there is a smooth function $\phi_\varepsilon$ such that $\omega_\varepsilon:=\omega+i\partial\overline{\partial}\phi_\varepsilon>0$, and Ricci$(\omega_\varepsilon)\geq-\varepsilon\omega_\varepsilon$. This is equivalent to the pointwise estimate
\begin{align*}
i\Theta_{T_X,\omega_\varepsilon}\wedge\omega_{\varepsilon}^{n-1}\geq-\varepsilon\cdot\mbox{Id}_{T_X}\omega^n_\varepsilon.
\end{align*}
Taking the induced metric on $T_X/\mathcal{F}_i$ (which we also denot by $\omega_\varepsilon$), the second fundamental form contributes  nonnegative terms on the quotient, hence the $\omega_\varepsilon$-trace yields
\begin{align*}
\mbox{Trace}(i\Theta_{T_X/\mathcal{F}_i,\omega_\varepsilon}\wedge\omega_{\varepsilon}^{n-1})\geq-\varepsilon\mbox{rank}(T_X/\mathcal{F}_i)\omega^n_\varepsilon.
\end{align*}
Therefore, putting $r_i=\mbox{rank}(T_X/\mathcal{F}_i)$, since for a line bundle, the curvature differs by $\partial\overline{\partial}$-exact terms from different choices of the Hermitian metric and both $\omega$ and $\omega_\varepsilon$ satisfy the assumption (*), we get
\begin{align*}
\int_Xc_1(T_X/\mathcal{F}_i)\wedge\omega^{n-1}&=\int_Xc_1(T_X/\mathcal{F}_i)\wedge\omega_\varepsilon^{n-1}\\
&\geq -\varepsilon r_i\int_X\omega_\varepsilon^n=-\varepsilon r_i\int_X\omega^n,
\end{align*}
and we are done. In case there are singularities, from the construction \cite{Bru01} they occur only on some analytic subset $S\subset X$ of codimension $2$. The first Chern forms calculated on $X\setminus S$ extend as locally integrable currents on $X$ and do not contribute any mass on $S$. The above calculations are still valid. Thus we complete the proof of Theorem \ref{semi-positivity}.

\subsection*{Acknowledgements}
The author would like to thank Professor Jean-Pierre Demailly for his talk about structures of K\"{a}hler manifolds given at Peking University, pointing out to him the key observations in the proof of Lemma \ref{Boucksom 2}, and many inspiring discussions from which the author benefited a lot. The author is grateful to  Professor Xiangyu Zhou and Kefeng Liu for their constant support,  encouragement, and  helpful discussions, to Professor Valentino Tosatti for many helpful discussions. Part of this research was done at the Department of Mathematics in University of California at Los Angeles under the support of Chinese Scholarship Council.

\end{document}